\documentclass[reqno,10pt,centertags,draft]{amsart}
\usepackage{amsmath,amsthm,amscd,amssymb,latexsym,esint,upref,stmaryrd,
enumerate,verbatim,yfonts,bm,mathtools}
\usepackage{color}
\usepackage{hyperref} 
\newcommand*{\mailto}[1]{\href{mailto:#1}{\nolinkurl{#1}}}
\newcommand{\arxiv}[1]{\href{http://arxiv.org/abs/#1}{arXiv:#1}}


\newcommand{\N}{{\mathbb N}}

\newcommand{\bbC}{{\mathbb{C}}}

\newcommand{\bbN}{{\mathbb{N}}}

\newcommand{\bbR}{{\mathbb{R}}}
\newcommand{\bbS}{{\mathbb{S}}}

\newcommand{\cB}{{\mathcal B}}
\newcommand{\cC}{{\mathcal C}}
\newcommand{\cD}{{\mathcal D}}

\newcommand{\cF}{{\mathcal F}}

\newcommand{\cH}{{\mathcal H}}

\newcommand{\cK}{{\mathcal K}}

\newcommand{\beq}{\begin{equation}}
\newcommand{\enq}{\end{equation}}




\DeclareMathOperator{\ran}{ran}
\DeclareMathOperator{\dom}{dom}

\DeclareMathOperator*{\slim}{s-lim}

\renewcommand{\Re}{\text{\rm Re}}
\renewcommand{\Im}{\text{\rm Im}}

\newcommand{\no}{\notag}
\newcommand{\lb}{\label}
\newcommand{\f}{\frac}
\newcommand{\ul}{\underline}
\newcommand{\ol}{\overline}

\newcommand{\hatt}{\widehat} 
\newcommand{\bi}{\bibitem}

\newcommand{\ma}[2]{\left(\begin{array}{#1} #2 \end{array} \right)}

\newcommand{\pa}{\partial}

\newcommand{\LN}{\big[L^2(\bbR^n)\big]^N} 
\newcommand{\LNs}{[L^2(\bbR^n)]^N} 
 
 
\newcommand{\LoneN}{\big[L_1^2(\bbR^n)\big]^N} 
\newcommand{\LoneNs}{[L_1^2(\bbR^n)]^N} 
 
\newcommand{\LmoneNs}{[L_{-1}^2(\bbR^n)]^N} 
\newcommand{\LonehN}{\big[L_{1/2}^2(\bbR^n)\big]^N} 
\newcommand{\LonehNs}{[L_{1/2}^2(\bbR^n)]^N} 
 






\newcommand{\WoneN}{\big[W^{1,2}(\bbR^n)\big]^N} 
 
\newcommand{\WtwoN}{\big[W^{2,2}(\bbR^n)\big]^N} 
 
\newcommand{\SN}{[S(\bbR^n)]^N} 
 

\let\geq\geqslant
\let\leq\leqslant

\makeatletter
\def\theequation{\@arabic\c@equation}


\allowdisplaybreaks 
\numberwithin{equation}{section}

\newtheorem{theorem}{Theorem}[section]
\newtheorem{proposition}[theorem]{Proposition}
\newtheorem{lemma}[theorem]{Lemma}

\newtheorem{hypothesis}[theorem]{Hypothesis}

\theoremstyle{remark}
\newtheorem{remark}[theorem]{Remark}


\begin{document}

\title[A Global Limiting Absorption Principle]{On the Global Limiting Absorption Principle \\ 
for Massless Dirac Operators}

\author[A.\ Carey et al.]{Alan Carey}  
\address{Mathematical Sciences Institute, Australian National University, 
Kingsley St., Canberra, ACT 0200, Australia 
and School of Mathematics and Applied Statistics, University of Wollongong, NSW, Australia,  2522}  
\email{\mailto{acarey@maths.anu.edu.au}}
\urladdr{\url{http://maths.anu.edu.au/~acarey/}}
  
\author[]{Fritz Gesztesy}
\address{Department of Mathematics, 
Baylor University, One Bear Place \#97328,
Waco, TX 76798-7328, USA}
\email{\mailto{Fritz\_Gesztesy@baylor.edu}}
\urladdr{\url{http://www.baylor.edu/math/index.php?id=935340}}

\author[]{Jens Kaad}
\address{Department of Mathematics and Computer Science,
University of Southern Denmark, Campusvej 55, DK-5230 Odense M, Denmark}
\email{\mailto{kaad@imada.sdu.dk}}

\author[]{Galina Levitina} 
\address{School of Mathematics and Statistics, UNSW, Kensington, NSW 2052,
Australia} 
\email{\mailto{g.levitina@student.unsw.edu.au}}

\author[]{Roger Nichols}
\address{Mathematics Department, The University of Tennessee at Chattanooga, 
415 EMCS Building, Dept. 6956, 615 McCallie Ave, Chattanooga, TN 37403, USA}
\email{\mailto{Roger-Nichols@utc.edu}}
\urladdr{\url{http://www.utc.edu/faculty/roger-nichols/index.php}}

\author[]{Denis Potapov}
\address{School of Mathematics and Statistics, UNSW, Kensington, NSW 2052,
Australia} 
\email{\mailto{d.potapov@unsw.edu.au}}

\author[]{Fedor Sukochev}
\address{School of Mathematics and Statistics, UNSW, Kensington, NSW 2052,
Australia} 
\email{\mailto{f.sukochev@unsw.edu.au}}

\date{\today}
\thanks{A.C., G.L., and F.S. gratefully acknowledge financial support from the Australian Research Council.  J.K. is supported by the DFF-research project 2 ``Automorphisms and invariants of operator algebras,'' no. 7014-00145B and by the Villum foundation (Grant 7423).}

\subjclass[2010]{Primary: 35P25, 35Q40, 81Q10; Secondary: 47A10, 47A40.}
\keywords{Dirac operators, limiting absorption principle, essential self-adjointness. }

\begin{abstract} 
We prove a global limiting absorption principle on the entire real line for free, massless Dirac operators $H_0 = \alpha \cdot (-i \nabla)$ for all space dimensions $n \in \bbN$, $n \geq 2$. This is a new result for all dimensions other than three, in particular, it applies to the two-dimensional case which is known to be of some relevance in applications to graphene. 

We also prove an essential self-adjointness result for first-order matrix-valued differential operators with Lipschitz coefficients.  
\end{abstract}

\maketitle



\section{Introduction} \lb{s1}

This paper was motivated by recent investigations of the Witten index (a possible substitute for the Fredholm index) for classes of non-Fredholm operators, a prime example of which being the massless Dirac operator, see, for instance, \cite{CGLPSZ16}--\cite{CGPST17}. 

The first result on a global limiting absorption principle on the entire real line for free, massive Dirac 
operators $H_0(m) = \alpha \cdot (-i \nabla) + m \, \beta$ (cf.\ \eqref{3.3a} for details), in the case of 
dimensions $n=3$, is due to Iftimovici and M{\u a}ntoiu \cite{IM99} in 1999. The first such result 
for free, massless Dirac operators $H_0 = \alpha \cdot (-i \nabla)$ (cf. \eqref{2.2}), again in dimensions $n=3$, was proved by Sait{\= o} and Umeda \cite{SU08} in 2008 upon relying on quite different methods as part of their study of zero-energy eigenvalues and resonances. Since, apparently, no other results are available in the massless case, we now fill this gap, and by suitably modifying the approach of Iftimovici and M{\u a}ntoiu, we prove a global limiting absorption principle on $\bbR^n$ for free,  massless Dirac operators in all dimensions $n \in \bbN$, $n \geq 2$. 

It is gratifying that the new results in dimensions $n \in \bbN \backslash \{3\}$, $n \geq 2$, include, 
in particular, the two-dimensional case $n=2$ which is known to have some relevance in applications to graphene.

We emphasize that proving a limiting absorption principle for free (usually, constant coefficient) operators $H_0$ (resp., $H_0(m)$) is always a first step in proving similar statements (typically, 
away from essential spectrum thresholds, though) for interacting Hamiltonians $H = H_0 + V$ 
(resp., $H(m) = H_0(m) + V$) on the basis of a sophisticated perturbative approach. The 
strategy behind such an approach has been spelled out in great detail, for instance, by Yafaev 
in \cite[Sect.~4.6, 4.7]{Ya92}.  

While we follow the broad contours outlined in the approach employed by Iftimovici and 
M{\u a}ntoiu \cite{IM99} in the massive case, $m > 0$, there are notable differences in our 
treatment of the massless case, $m = 0$; of these we note, in particular, the following: 

$\bullet$ First, we do not rely on Nelson's commutator theorem in proving essential selfa-djointness
of the conjugate operator $A$ introduced in \eqref{2.6a}. Instead, we employ an extension of an essential self-adjointness result for general first-order matrix-valued differential operators (such as, $A$, upon applying the Fourier transform) going back to Chernoff, \cite{Ch73}, but see also \cite{HR00} for a more modern treatment. Here, we treat only the flat case but permit Lipschitz coefficients instead of the more restrictive (and more common) smoothness assumption. We expect this topic, to be treated in detail in Section \ref{s2}, to be of independent interest.


$\bullet$ Second, given our focus on massless Dirac operators in all dimensions $n \geq 2$, we were led to a new 
conjugate operator $A$ in \eqref{2.6a} when compared to the massive case discussed in 
\cite{IM99}. This causes a variety of additional technical difficulties in Section \ref{s3} briefly 
outlined in the paragraph following \eqref{2.7a}. 

$\bullet$ Third, rather than applying Hardy's inequality as in \cite{IM99}, which is only 
applicable in dimensions $n \geq 3$ (a restriction we wanted to avoid by all means), we now 
apply Kato's inequality (cf.~\eqref{2.20a}, and especially, \eqref{2.22a}). 

Our principal result, the global limiting absorption principle on the real line for $H_0$ is presented in 
Section \ref{s3} (see Theorem \ref{t3.15}). This is followed by a standard application to scattering theory for the pair of self-adjoint operators $(H=H_0+V, H_0)$, with sufficiently weak interactions 
$V$ (cf.\ Theorem \ref{t3.17}), and some remarks that put our results in proper perspective.

We conclude this introduction by briefly summarizing some of the notation used throughout 
this paper: Vectors in $\bbR^n$ are denoted by $ x = (x_1,\ldots,x_n)\in \bbR^n$ or $ p  = (p_1,\ldots,p_n)\in \bbR^n$, $n \in \bbN$. For $ x = (x_1,\ldots,x_n)\in \bbR^n$ we abbreviate 
\begin{equation}
\langle x \rangle = (1+| x |^2)^{1/2},
\end{equation}
where $|x|= \big(x_1^2+\cdots+x_n^2\big)^{1/2}$ denotes the standard Euclidean norm of $x \in \bbR^n$, $n \in \bbN$.

The dot symbol, ``$\, \cdot \,$'',\ is used in three different ways: First, it denotes the standard scalar product in $\bbR^n$, 
\begin{equation}
x \cdot y = \sum_{j=1}^n x_j y_j, \quad x =(x_1,\dots,x_n), \, y = (y_1,\dots,y_n) \in \bbR^n.
\end{equation}
Second, we will also use it for $n$-vectors of operators, ${\ul A} = (A_1,\ldots,A_n)$ 
and ${\ul B} = (B_1,\ldots, B_n)$ acting in the same Hilbert space in the form 
\begin{equation}
{\ul A} \cdot {\ul B} = \sum_{j=1}^n A_j B_j,
\end{equation}
whenever it is obvious how to resolve the domain issues of the possibly unbounded operators involved. 
Moreover, for $T$ an operator in some Hilbert space $\cH$ and 
$A = (a_{j,k})_{1 \leq j,k \leq N} \in \bbC^{N \times N}$ an $N \times N$ matrix with constant 
complex-valued entries acting in $\bbC^N$, $N \in \bbN$, we will avoid tensor product notation in
\begin{equation}
T \otimes A \, \text{ in } \, \cH \otimes \bbC^N 
\end{equation} 
such that 
\begin{equation}
\cH \otimes \bbC^N \, \text{ is identified with } \, 
\cH^N = \begin{pmatrix} \cH \\ \vdots \\ \cH \end{pmatrix}, 
\end{equation}
and 
\begin{equation}
T \otimes A \, \text{ is identified with } \, T A = (T a_{j,k})_{1 \leq j,k \leq N} = 
(a_{j,k} T)_{1 \leq j,k \leq N} = A T.    \lb{1.5} 
\end{equation} 
That is, we interpret $T \otimes A$ as entrywise multiplication, resulting in an $N \times N$ block operator matrix $TA = AT$. Thus, if ${\ul T} = (T_1,\dots,T_n)$, with $T_j$, $1 \leq j \leq n$, operators 
in $\cH$, and ${\ul A} = (A_1,\dots,A_n)$, with $A_j \in \bbC^{N \times N}$, $1 \leq j \leq n$, $N \times N$ matrices in $\bbC^N$, we will finally employ the dot symbol also in the form
\begin{equation}
{\ul T} \cdot {\ul A} = \sum_{j=1}^n T_j A_j = \sum_{j=1}^n A_j T_j = {\ul A} \cdot {\ul T},
\end{equation} 
where $T_j A_j = A_j T_j$, $1 \leq j \leq n$, are defined as in \eqref{1.5}. In the interest of clarity we temporarily underlined vectors of operators (and matrices) such as ${\ul T} = (T_1,\dots,T_n)$; we will refrain from doing so in the bulk of this manuscript. 

For $X$ a given set, $A \in X^{N \times N}$, $N \in \bbN$, represents an $N \times N$ matrix $A$ with entries in $X$.  

Let $\cH$, $\cK$ be separable complex Hilbert spaces, 
$(\, \cdot \,,\, \cdot \,)_{\cH}$ the scalar 
product in $\cH$ (linear in the second argument), 
$\|\, \cdot \, \|_{\cH}$ the norm on $\cH$, 
and $I_{\cH}$ the identity operator in $\cH$.
If $T$ is a linear operator mapping (a subspace of) a Hilbert space 
into another, then $\dom(T)$ and $\ker(T)$ denote the domain 
and kernel (i.e., null space) of $T$. The closure of a closable operator 
$A$ is denoted by $\ol A$. 

The resolvent set and spectrum of a closed operator $T$ are denoted by $\rho(T)$ and $\sigma(T)$, respectively. 

The Banach space of bounded linear operators on a separable 
complex Hilbert space $\cH$ is denoted by $\cB(\cH)$. 

For a densely defined closed operator $S$ in $\cH$ we employ the abbreviation $\langle S \rangle := \big(I_{\cH} + |S|^2\big)^{1/2}$, and 
similarly, if $T = (T_1,\dots,T_n)$, with $T_j$ densely defined and closed 
in $\cH$, $1 \leq j \leq n$, 
\begin{equation}
\langle T \rangle = (I_{\cH} + | T |^2)^{1/2}, \quad | T | 
= \big(|T_1|^2+\cdots+|T_n|^2\big)^{1/2},
\end{equation}
whenever it is obvious how to define $|T_1|^2+\cdots+|T_n|^2$ as a nonnegative self-adjoint operator. 

To simplify notation, we will frequently omit Lebesgue measure whenever possible 
and simply use $L^2(\bbR^n)$ instead of 
$L^2(\bbR^n; d^nx)$, etc. 
If no confusion can arise, the identity operator in $L^2(\bbR^n)$   
is simply denoted by $I$, and $I_N$ represents the identity operator in $\bbC^N$, 
$N \in \bbN$.  

The symbol $\cF$ is used to denote the Fourier transform and $\widehat f : = \cF f$.  For $N\in \mathbb{N}$, the Fourier transform of functions in $\LN$is taken component-wise, that is, if $f=(f_1,\ldots,f_N)^{\top}\in \LN$, then $\widehat{f} = (\widehat{f}_1,\ldots,\widehat{f}_N)^{\top}$.  

If $\Omega\subseteq \bbR^n$ is open, then $C_0^{\infty}(\Omega)$ denotes the set of infinitely differentiable functions on $\bbR^n$ with compact support in $\Omega$.  In addition, $S(\bbR^n)$ denotes the Schwartz space of rapidly decreasing functions on $\bbR^n$, $S'(\bbR^n)$ the space of tempered distributions, and $W^{k,p}(\bbR^n)$, $k \in \bbN$, $p \geq 1$, the standard $L^p$-based Sobolev spaces.

We abbreviate $\bbC_{\pm} = \{z \in \bbC \,|\, \pm \Im(z) > 0\}$. The symbol 
$\lfloor \, \cdot \, \rfloor$ denotes the floor function on $\bbR$, that is, $\lfloor x \rfloor$ characterizes the largest integer less than or equal to $x \in \bbR$.  

Following a standard practice in Mathematical Physics, we will simplify the notation of operators of multiplication by a scalar or matrix-valued function $V$ and hence use $V$ rather than the more elaborate symbol $M_V$ throughout this manuscript.

\section{Essential Self-Adjointness of First-Order Differential Operators With 
Lipschitz Coefficients} \lb{s2}

We start with a general self-adjointness result for first-order differential expressions which we 
believe is of independent interest. It will be applied in the subsequent section in connection 
with the operator $A$ in \eqref{2.6a}.

Let $n, N \in \mathbb{N}$ be fixed.  We consider $N\times N$ block operator matrices of bounded operators of multiplication 
\begin{equation}
F_j \in \cB\big(L^2(\mathbb{R}^n)\big)^{N \times N}, \quad j\in \bbN,\, 1\leq j\leq n,   \lb{2.1a}
\end{equation}
(e.g., $F_j$ can be $N \times N$ matrix-valued operators of multiplication satisfying $F_j = (F_{j,k,\ell})_{1 \leq k, \ell \leq N}$, $F_{j,k,\ell} \in L^{\infty}(\bbR^n)$, $1 \leq j \leq n$, 
$1 \leq k, \ell \leq N$) together with the unbounded, closable operators 
\begin{equation}
-i \pa_j : \WoneN \to \LN,  \quad j\in \bbN, \, 1\leq j\leq n.
\end{equation}
where $\partial_j = \partial/\partial x_j$, $1 \leq j \leq n$. We put
\begin{equation}
\nabla := \ma{c}{\partial_1 \\ \vdots \\ \partial_n} : \WoneN
\to \Big(\LN\Big)^{n} = \begin{pmatrix} \LN \\ \vdots \\ \LN \end{pmatrix},
\end{equation}
where $\Big(\LN\Big)^{n}$ denotes the $n$-fold direct sum of $\LN$ with itself.  
One notes that $\nabla$ is, in fact, densely defined and closed. Define
\begin{equation}
-\Delta := \nabla^* \nabla = - \sum_{j = 1}^n \pa_j^2 : \WtwoN \to \LN.
\end{equation}

The main goal of this section is to prove the essential self-adjointness of a class of first-order differential operators.  To make the result precise, we introduce the following set of assumptions, which will remain in effect throughout the remainder of this section.

\begin{hypothesis}\lb{h2.1}
$(i)$ Suppose $F_j$, $1 \leq j \leq n$ satisfy \eqref{2.1a}. \\[1mm]  
$(ii)$ Let $\mathcal{D} \subseteq \LN$ be a core for $\nabla$ with the property that
\begin{equation}
F_j( \mathcal{D}) \subseteq \mathcal{D}, \quad F_j^*( \mathcal{D}) \subseteq \mathcal{D},\quad j\in \bbN,\, 1\leq j\leq n.
\end{equation}
$(iii)$ Suppose that the commutators
\begin{equation}
\big[ -i \partial _k , F_j \big], \, \big[-i \pa_k, F_j^*\big] : \mathcal{D} \to \LN,\quad j,k\in \bbN,\, 1\leq j,k\leq n,
\end{equation}
extend to bounded operators 
\begin{equation}
d_k(F_j) , \, d_k(F_j^*)\in \cB\Big(\LN\Big),\quad j,k\in \bbN,\,  1\leq j,k\leq n,
\end{equation}
respectively.\\[1mm]
$(iv)$ Define the symmetric unbounded first-order differential operator
\begin{equation}
L := -i \sum_{j = 1}^n F_j \partial_j - i \sum_{j = 1}^n \partial_j F_j^* : \mathcal{D} \to \LN.    \lb{2.9} 
\end{equation}
\end{hypothesis}

\begin{proposition}\label{p2.2}
Assume Hypothesis \ref{h2.1}.  Then items $(ii)$ and $(iii)$ in Hypothesis \ref{h2.1} hold with $\cD$ replaced by $\WoneN$.  Moreover,
\begin{equation}\lb{a.10s}
[ W^{1,2}(\bbR^n) ]^N \subseteq \dom \big( \overline{L}\big).
\end{equation}
\end{proposition}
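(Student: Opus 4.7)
The plan is to push the invariance property (ii) and the commutator identity (iii) from the core $\mathcal{D}$ to the full Sobolev space $[W^{1,2}(\bbR^n)]^N$ by a straightforward approximation argument, and then to use the resulting identities to rewrite $L$ in a form whose continuity in the $W^{1,2}$ graph norm is manifest.

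First, I would fix $f \in [W^{1,2}(\bbR^n)]^N$ and, using that $\mathcal{D}$ is a core for $\nabla$ (Hypothesis \ref{h2.1}(ii)), choose a sequence $\{f_m\}_{m \in \bbN} \subseteq \mathcal{D}$ with $f_m \to f$ in $[L^2(\bbR^n)]^N$ and $\nabla f_m \to \nabla f$ in $([L^2(\bbR^n)]^N)^n$. Since $F_j \in \cB([L^2(\bbR^n)])^{N \times N}$, we have $F_j f_m \to F_j f$ in $[L^2(\bbR^n)]^N$. On $\mathcal{D}$, Hypothesis \ref{h2.1}(iii) gives the identity
\begin{equation}
(-i \partial_k) F_j f_m = F_j (-i \partial_k) f_m + d_k(F_j) f_m, \quad 1 \leq k \leq n,
\end{equation}
whose right-hand side converges in $[L^2(\bbR^n)]^N$ to $F_j (-i \partial_k) f + d_k(F_j) f$. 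Closability of $\partial_k$ then forces $F_j f \in [W^{1,2}(\bbR^n)]^N$ together with
\begin{equation}
[-i \partial_k, F_j] f = d_k(F_j) f, \quad f \in [W^{1,2}(\bbR^n)]^N.
\end{equation}
The same argument applied to $F_j^*$ (which by Hypothesis \ref{h2.1}(ii)--(iii) enjoys the same properties as $F_j$) gives the corresponding statements for $F_j^*$, establishing (ii) and (iii) on $[W^{1,2}(\bbR^n)]^N$.

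Next, for the inclusion \eqref{a.10s}, I would use the just-established commutator identity to rewrite $L$ on $\mathcal{D}$ in a form that only involves first-order derivatives composed with bounded multiplication operators. Namely, for $g \in \mathcal{D}$,
\begin{align}
L g &= -i \sum_{j=1}^n F_j \partial_j g - i \sum_{j=1}^n \partial_j F_j^* g \\
&= \sum_{j=1}^n F_j (-i\partial_j) g + \sum_{j=1}^n \bigl( F_j^* (-i\partial_j) g + d_j(F_j^*) g \bigr) \\
&= \sum_{j=1}^n (F_j + F_j^*)(-i\partial_j) g + \sum_{j=1}^n d_j(F_j^*) g.
\end{align}
The right-hand side is clearly continuous in the $W^{1,2}$-norm on $g$, since $F_j + F_j^*$ and $d_j(F_j^*)$ are all bounded on $[L^2(\bbR^n)]^N$. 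Given $f \in [W^{1,2}(\bbR^n)]^N$, I would choose the approximating sequence $\{f_m\} \subseteq \mathcal{D}$ as before so that $f_m \to f$ in $[W^{1,2}(\bbR^n)]^N$; then $f_m \to f$ in $[L^2(\bbR^n)]^N$ and $L f_m$ converges in $[L^2(\bbR^n)]^N$, which by definition of $\overline{L}$ yields $f \in \dom(\overline{L})$.

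The only mild subtlety is the existence of such a sequence $\{f_m\} \subseteq \mathcal{D}$ converging in the full $W^{1,2}$-norm, rather than merely in the graph norm of $\nabla$; but since $\mathcal{D}$ is a core for $\nabla$ and the graph norm of $\nabla$ coincides with the $W^{1,2}$-norm on $[W^{1,2}(\bbR^n)]^N$, this is automatic. I do not anticipate any genuine obstacle here: the entire proof is an exercise in combining closability, Hypothesis \ref{h2.1}(ii)--(iii), and the algebraic rearrangement of $L$ that eliminates the rightmost derivative in $\partial_j F_j^*$.
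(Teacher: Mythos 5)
Your proof is correct and takes essentially the same approach as the paper: you rewrite $L$ on $\cD$ as $\sum_j (F_j + F_j^*)(-i\partial_j) + \sum_j d_j(F_j^*)$ (the paper writes this as $F \cdot (-i\nabla) + (F^*)^{\top} \cdot (-i\nabla) + \sum_j d_j(F_j^*)$) and then use boundedness of the coefficients together with the fact that $\cD$ is a core for $\nabla$. The paper simply cites the extension of (ii) and (iii) to $\WoneN$ as standard (referring to \cite[Proposition~2.1]{FMR14}), whereas you spell out the routine closability argument explicitly.
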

\begin{proof}
The proof that items $(ii)$ and $(iii)$ in Hypothesis \ref{h2.1} hold with the choice $\cD = \WoneN$ is standard (cf., e.g., \cite[Proposition 2.1]{FMR14}).

One notices that $L$ is symmetric by construction.  In particular, $L$ is closable and hence 
$\overline{L}$ is well-defined. To show that $[ W^{1,2}(\bbR^n) ]^N \subseteq \dom \big( \overline{L}\big)$, define the bounded operators
\begin{equation}
F = (F_1, \dots, F_n) : \big(\LN\big)^{n} \to \LN, 
\end{equation}
and
\begin{equation}
(F^*)^{\top} = (F_1^*, \dots, F_n^*) : \big(\LN\big)^{n} \to \LN .
\end{equation}
Using matrix multiplication one verifies that
\begin{equation}
L = F \cdot (-i \nabla) + (F^*)^{\top} \cdot (-i \nabla) + \sum_{j = 1}^n d_j(F_j^*) : \mathcal{D} \to \LN.
\end{equation}
Since $F,(F^*)^{\top}$, and $d_j(F_j^*)$, $1\leq j\leq n$, are all bounded and $\mathcal{D}$ is a core for $\nabla$, one infers
\begin{equation}
\WoneN  = \dom( \nabla ) \subseteq \dom( \overline{L} ),
\end{equation}
establishing \eqref{a.10s}.
\end{proof}

The proof of the main result of this section relies on the following lemma which yields a useful resolvent-type identity.

\begin{lemma}\label{l2.3}
Assume Hypothesis \ref{h2.1} and set
\begin{equation}
R_m := \big(I_{\LNs} - m^{-1} \Delta\big)^{-1} : \LN \to \LN, \quad m \in \N.
\end{equation}
Then the following resolvent identity holds:
\begin{equation}\label{eq:commutator}
\begin{split}
\big( \overline{L} R_m - R_m L^* \big) g 
& = - \frac{1}{m} \sum_{j,k = 1}^n \big(\pa_k R_m d_k(F_j) + R_m d_k(F_j) \pa_k \big) \pa_j 
R_m g \\
& \quad - \frac{1}{m} \sum_{j,k = 1}^n \pa_j \big(\pa_k R_m d_k(F_j^*) + R_m d_k(F_j^*) \pa_k\big) 
R_m g 
\end{split}
\end{equation}
for all $g \in \dom(L^*)$ and all $m \in \N$. In particular, the commutator
\begin{equation}
\overline{L} \big(I_{\LNs} - m^{-1} \Delta\big)^{-1} 
- \big(I_{\LNs} - m^{-1} \Delta\big)^{-1} L^* : \dom(L^*) \to \LN
\end{equation}
extends to a bounded operator $X_m : \LN \to \LN$ for all $m \in \N$ and
\begin{equation}
\sup_{m \in \N} \| X_m \|_{\cB(\LNs)} < \infty .
\end{equation}
\end{lemma}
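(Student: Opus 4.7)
My plan is to first establish the identity \eqref{eq:commutator} on the computationally convenient core $\cD$ and then extend it to $\dom(L^*)$ via a duality argument exploiting skew-adjointness of the right-hand side; throughout, let $X_m$ denote the bounded operator given by the right-hand side of \eqref{eq:commutator}. The uniform bound $\sup_{m\in\bbN}\|X_m\|_{\cB(\LNs)}<\infty$ is an immediate consequence of spectral calculus for the Fourier multipliers $R_m$, $\partial_j R_m$, $\partial_j\partial_k R_m$: the scalar bounds $\|R_m\|\le 1$, $\|\partial_j R_m\,\partial_k R_m\|\le m/4$, and $\|\partial_j\partial_k R_m\|\le m$ combine with the prefactor $m^{-1}$ and with the uniform boundedness of $d_k(F_j)$, $d_k(F_j^*)$ from Hypothesis~\ref{h2.1}$(iii)$ to give the claim.

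For $\varphi\in\cD$ I intend to compute $(\overline{L}R_m-R_m L)\varphi$ by hand. Using Hypothesis~\ref{h2.1}$(ii)$ (which places $F_j^*\varphi$ back in $\cD\subseteq W^{1,2}$), the commutation $\partial_j R_m=R_m\partial_j$, and the explicit form of $\overline{L}$ on $W^{1,2}$ from Proposition~\ref{p2.2}, a rearrangement yields
\[
(\overline{L}R_m-R_m L)\varphi = -i\sum_{j=1}^n [F_j,R_m]\,\partial_j\varphi - i\sum_{j=1}^n \partial_j[F_j^*,R_m]\,\varphi.
\]
Each commutator is then evaluated via the resolvent identity $R_m=I+m^{-1}\Delta R_m$, which algebraically forces $(mI-\Delta)[F_j,R_m]=[F_j,\Delta]R_m$ and hence $[F_j,R_m]=m^{-1}R_m[F_j,\Delta]R_m$ (and similarly for $F_j^*$). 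Substituting the identity $[F_j,\Delta]=-i\sum_k(d_k(F_j)\partial_k+\partial_k d_k(F_j))$ and reorganizing using $R_m\partial_k=\partial_k R_m$ reproduces exactly the two sums comprising $X_m\varphi$.

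To extend the identity to $g\in\dom(L^*)$, I will show $X_m$ is skew-adjoint. The relation $d_k(F_j)^*=-d_k(F_j^*)$ (from self-adjointness of $-i\partial_k$) gives, by a termwise adjoint computation, that each term in the first sum of $X_m$ has as its adjoint the negative of the corresponding term in the second sum (and vice versa), so $X_m^*=-X_m$. For $\phi\in\cD$ and $g\in\dom(L^*)$, the elementary pairings $(\phi,\overline{L}R_m g)=(L\phi,R_m g)=(R_m L\phi,g)$ (integration by parts on $R_m g\in W^{2,2}$, together with self-adjointness of $R_m$) and $(\phi,R_m L^*g)=(R_m\phi,L^*g)=(\overline{L}R_m\phi,g)$ (using $R_m\phi\in W^{2,2}\subseteq\dom(\overline{L})$ from Proposition~\ref{p2.2} and $(\overline{L})^*=L^*$) combine with the identity on $\cD$ and skew-adjointness to give $(\phi,(\overline{L}R_m-R_m L^*)g)=-((\overline{L}R_m-R_m L)\phi,g)=-(X_m\phi,g)=(\phi,X_m g)$. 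Density of $\cD$ in $\LNs$ then yields the full identity.

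The main obstacle is the rigorous interpretation of $[F_j,R_m]=m^{-1}R_m[F_j,\Delta]R_m$, since $F_j$ need not preserve $W^{2,2}$ and $\Delta F_j$ has no direct meaning as an $L^2$-operator. My approach is to take the right-hand side by definition from the expansion $[F_j,\Delta]=-i\sum_k(d_k(F_j)\partial_k+\partial_k d_k(F_j))$: after applying the outer $R_m$ and commuting it past each $\partial_k$, every summand takes the manifestly bounded form $\partial_k R_m d_k(F_j)\cdots R_m\psi$ with $R_m d_k(F_j)\cdots R_m\psi\in W^{2,2}$, whose derivatives lie in $L^2$.
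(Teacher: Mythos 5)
Your proposal is correct and follows essentially the same route as the paper's proof: establish the identity on the core $\cD$ by expanding $\overline{L}R_m - R_m L$ into commutators $[F_j,R_m]$ and $[F_j^*,R_m]$, rewrite them via the resolvent identity and the commutator expansion $[\Delta,F_j] = i\sum_k(\partial_k d_k(F_j)+d_k(F_j)\partial_k)$, observe skew-adjointness $X_m^*=-X_m$ (which the paper states without the explicit $d_k(F_j)^*=-d_k(F_j^*)$ derivation you give), and extend to $\dom(L^*)$ by duality using density of $\cD$. The only cosmetic difference is that you organize the commutator algebra through $[F_j,R_m]=m^{-1}R_m[F_j,\Delta]R_m$ directly, whereas the paper passes through the equivalent form $\Delta R_m F_j R_m - R_m F_j \Delta R_m$.
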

\begin{proof}
The second claim of the lemma follows from the estimate
\begin{equation}
\bigg\| \pa_j \pa_k R_m \bigg\|_{\cB( \LNs)} \leq m,\quad m \in \N,
\end{equation}
and from our assumption that
\begin{equation}
d_k(F_j^*),d_k(F_j) : \LN \to \LN,\quad 1\leq j,k \leq n,\, j,k\in \bbN,
\end{equation}
are bounded. One notes also that
\begin{equation}\lb{2.20p}
X_m^* = - X_m, \quad m \in \N.
\end{equation}
Therefore, we focus on proving identity \eqref{eq:commutator}. 

We first prove identity \eqref{eq:commutator} for $f \in \dom(L) = \cD$. To this end, let 
$f \in \cD$ be given. The desired identity is a result of the following computation (applying Proposition \ref{p2.2} to take care of domain issues):
\begin{equation}
\begin{split}
\overline{L} R_m f - R_m L f
& = -i \sum_{j = 1}^n (F_j R_m - R_m F_j) \pa_j f 
-i \sum_{j = 1}^n \pa_j ( F_j^* R_m - R_m F_j^* ) f    \\
& = \frac{i}{m} \sum_{j = 1}^n ( \Delta R_m F_j R_m - R_m F_j \Delta R_m) (\pa_j f) \\
& \quad + \frac{i}{m} \sum_{j = 1}^n \pa_j ( \Delta R_m F_j^* R_m - R_m F_j^* \Delta R_m) f \\
& = - \frac{1}{m} \sum_{j,k = 1}^n \big(\pa_k R_m d_k(F_j) R_m + R_m d_k(F_j) \pa_k R_m\big) 
(\pa_j f)    \\
& \quad - \frac{1}{m} \sum_{j,k = 1}^n \pa_j \big(\pa_k R_m d_k(F_j^*) R_m + R_m d_k(F_j^*) 
\pa_k R_m\big) f.
\end{split}
\end{equation} 

We have now shown that identity \eqref{eq:commutator} holds for all $f \in \dom(L)$, that is, 
\begin{equation}
\big( \overline{L} R_m - R_m L \big) f = X_m f, \quad f \in \dom(L).
\end{equation}
For $g \in \dom(L^*)$ one then computes
\begin{align}
\big(\big( \overline{L} R_m - R_m L^* \big) g, f \big)_{\LNs}&= (g, (R_m L - \overline{L} R_m) f)_{\LNs}\no\\
&= - (g, X_m f)_{\LNs}\no\\
&= (X_m g, f)_{\LNs},\quad  f \in \dom(L),
\end{align}
where the last equality makes use of \eqref{2.20p}.  Since $\dom(L)$ is dense in $\LN$, this proves identity \eqref{eq:commutator}.
\end{proof}

Lemma \ref{l2.3}, in particular \eqref{eq:commutator}, is essentially a restatement of the resolvent identity. The slightly complicated form of this identity is due to our assumptions on the coefficients $F_j, F_j^*$, $1\leq j\leq n$. Indeed, we do \emph{not} assume that these operators preserve the domain of the Laplacian $\Delta : \WtwoN \to \LN$.  In fact, they are only assumed to preserve the domain of the gradient 
$\nabla : \WoneN \to \Big(\LN\Big)^{n}$.

With these preparations in place, we are now ready to state and prove the main result of this section.

\begin{theorem}\label{t2.4}
Assume Hypothesis \ref{h2.1}. Then $L : \cD \to [ L^2(\bbR^n) ]^N$ is essentially self-adjoint.
\end{theorem}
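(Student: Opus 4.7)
The plan is to prove the inclusion $\dom(L^*) \subseteq \dom(\overline{L})$; since $L$ is symmetric, the reverse inclusion $\overline{L} \subseteq L^*$ is automatic, so equality of the two operators will follow, forcing $\overline{L}$ to be self-adjoint and hence $L$ to be essentially self-adjoint.

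Fix $g \in \dom(L^*)$ and set $g_m := R_m g$ for $m \in \bbN$. Then $g_m \in [W^{2,2}(\bbR^n)]^N \subseteq [W^{1,2}(\bbR^n)]^N \subseteq \dom(\overline{L})$, where the last inclusion is Proposition \ref{p2.2}. Since $R_m$ is a bounded self-adjoint Fourier multiplier converging strongly to the identity as $m \to \infty$, one has $g_m \to g$ and $R_m L^* g \to L^* g$ in $[L^2(\bbR^n)]^N$. Applying Lemma \ref{l2.3} at $g$ rewrites
\begin{equation*}
\overline{L} g_m = R_m L^* g + X_m g, \qquad m \in \bbN.
\end{equation*}
If we can show the strong convergence $X_m g \to 0$ in $[L^2(\bbR^n)]^N$, then $\overline{L} g_m \to L^* g$, and closedness of $\overline{L}$ combined with $g_m \to g$ forces $g \in \dom(\overline{L})$ with $\overline{L} g = L^* g$, completing the proof.

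The remaining task, which is the main obstacle, is the strong convergence $X_m g \to 0$. I would handle it directly from the formula \eqref{eq:commutator}: each summand of $X_m$ has the form $m^{-1} T_m^{(1)} C T_m^{(2)}$, where $C \in \{d_k(F_j), d_k(F_j^*)\}$ is a fixed bounded operator on $[L^2(\bbR^n)]^N$ and each $T_m^{(i)}$ is one of $R_m$, $\pa_j R_m$, or $\pa_j \pa_k R_m$. Splitting the prefactor as $m^{-1} = m^{-1/2} \cdot m^{-1/2}$ and absorbing one half-power into each adjacent $T_m^{(i)}$ (and, for the summand in which both derivatives sit on the same side, using the Fourier-multiplier bound $|p_j p_k|/(m + |p|^2) \leq 1/2$ directly), one verifies from the explicit Fourier symbols that every resulting factor is uniformly bounded in $m$ with symbol tending pointwise to $0$; dominated convergence then upgrades each factor to a strongly null sequence on $[L^2(\bbR^n)]^N$. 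Since the product of a uniformly bounded operator sequence with a strongly null one is strongly null, each summand of $X_m$ converges strongly to $0$ on all of $[L^2(\bbR^n)]^N$, not merely on $\dom(L^*)$; in particular $X_m g \to 0$, and the proof is complete. The whole difficulty thus reduces to an elementary Fourier-multiplier calculation, and it is precisely this reduction that exploits the Friedrichs-mollifier structure of Lemma \ref{l2.3}, allowing the coefficients $F_j$ merely to be Lipschitz (so that $d_k(F_j), d_k(F_j^*)$ are bounded) rather than smooth.
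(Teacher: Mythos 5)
Your proof is correct, and it follows the paper's strategy up through the key identity
$\overline{L}\,R_m g = R_m L^* g + X_m g$ from Lemma~\ref{l2.3}. Where you diverge is in the concluding step. The paper is content to observe that the right-hand side is \emph{bounded} in $m$ (since $\sup_m\|X_m\|_{\cB(\LNs)}<\infty$ and $\|R_m\|\leq 1$), and then invokes the standard fact that, for a closed operator $T$, a sequence $x_m\to x$ with $\sup_m\|Tx_m\|<\infty$ forces $x\in\dom(T)$ (via weak compactness and weak closedness of the graph). You prove instead the sharper fact that $X_m g\to 0$ \emph{strongly}, through an explicit Fourier-multiplier analysis of each summand in \eqref{eq:commutator}, and then conclude directly from the plain (norm) definition of closedness of $\overline{L}$. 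Your route costs a little more symbol-pushing but avoids the weak-compactness step entirely; the paper's route is shorter, using exactly what Lemma~\ref{l2.3} hands you and no more.

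One small inaccuracy in your exposition: for the summands in which both derivatives fall on one side (so the full prefactor $m^{-1}$ is absorbed by $\partial_j\partial_k R_m$), the companion factor is a bare $R_m$, whose symbol $m/(m+|p|^2)$ tends pointwise to $1$, not $0$; so it is not literally true that ``every resulting factor'' has symbol tending to $0$. This is harmless for your conclusion, because it suffices that one factor in each product be strongly null while the other is uniformly bounded and applied to a convergent sequence (here $R_m g\to g$), but the sentence should be phrased accordingly.
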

\begin{proof}
The operator $L$ is symmetric by construction.  To prove that $L$ is essentially self-adjoint it suffices to show that
\begin{equation}
\dom(L^*) \subseteq \dom( \overline{L}).
\end{equation}
To this end, let $g \in \dom(L^*)$. One notes that
\begin{equation}
\lim_{m\to \infty}\big(I_{\LNs} - m^{-1}\Delta\big)^{-1} g = g,
\end{equation}
and, moreover, 
\begin{equation}
\big(I_{\LNs} - m^{-1} \Delta\big)^{-1} g \in \dom( \overline{L}),
\end{equation}
since 
$\WtwoN = \dom(-\Delta) \subseteq \WoneN \subseteq \dom(\overline{L})$. 
By Lemma \ref{l2.3} one infers
\begin{equation}
\overline{L} \big(I_{\LNs} - m^{-1} \Delta\big)^{-1} g
= \big(I_{\LNs} - m^{-1} \Delta\big)^{-1} L^* g + X_m g, \quad m \in \N,
\end{equation}
so that the sequence
\begin{equation}
\Big\{ \overline{L} \big(I_{\LNs} - m^{-1} \Delta\big)^{-1} g \Big\}_{m\in \bbN}
\end{equation}
is bounded in the norm of $\LN$, implying $g \in \dom(\overline{L})$. 
\end{proof}

\begin{remark}\lb{r2.5}
Theorem \ref{t2.4} extends Proposition~10.2.11 in Higson and Roe \cite{HR00} (see also \cite{Ch73}) in the smooth context,
while we now permit Lipschitz coefficients $F_j$ in the differential expression $L$ in \eqref{2.9}. More precisely, assuming smooth coefficients, Higson and Roe prove that any first-order, symmetric differential operator of finite propagation speed acting on the smooth compactly supported sections of a smooth hermitian vector bundle on any complete manifold (without boundary) is essentially self-adjoint. While the results proved here are very similar in nature (although restricted to the flat case), the actual strategy of proof in Theorem \ref{t2.4} differs from the one employed in \cite{HR00} and is inspired by ideas appearing in \cite{Ka17} and \cite{KaLe12}, \cite{MeRe16} in the context of Hilbert $C^*$-modules. In a broader picture our methods relate to the noncommutative geometry program \cite{Co94}, since our proof is in some sense coordinate-free and can therefore be readily generalized to a much wider array of unbounded operators of the form $\sum_j A_j \cdot D_j + B$ (with $D_j$ and $D_k$ commuting for all 
$1 \leq j, k \leq n$), provided that an appropriate reference operator, for example $\sum_j D_j^2$, 
is already well-understood. 

For a very recent approach to essential self-adjointness of first-order differential operators with applications to Dirac-type operators we refer to \cite{BS17} (the approach in \cite{BS17} is quite different, relying on ellipticity conditions which are not used in our setup). 

Needless to stress, self-adjointness is one of the single most important properties of an unbounded operator, because of its implications to the spectrum and to the Borel functional calculus. 
\hfill $\diamond$
\end{remark}

\section{A Global Limiting Absorption Principle for \\ Free, Massless Dirac Operators} 
\lb{s3}

In 1999, Iftimovici and M{\u a}ntoiu \cite{IM99} proved a global limiting absorption principle, 
that is, one on the entire real axis and hence including threshold energies $\pm m$ for the free, massive Dirac operator $H_0(m)$ with mass $m > 0$ in three dimensions. The first proof of a global limiting absorption principle for massless Dirac operators $H_0$ in three dimensions is due to 
Sait{\= o} and Umeda \cite{SU08} in 2008. As no other result on a global limiting absorption principle 
in the massless case is known to us, we now fill this gap and upon modifying the approach by Iftimovici and M{\u a}ntoiu for $m > 0$, we treat free, massless ($m=0$) Dirac operators 
$H_0$ in all dimensions $n \in \bbN$, $n \geq 2$. This includes, in particular, a new result for the case $n=2$ 
which is known to be connected to applications to graphene.  

Here the notion ``free'' Dirac operator refers to a particular constant coefficient first-order matrix-valued differential operator with vanishing electric (and magnetic) potentials, see \eqref{2.2}. 

To rigorously define the free massless $n$-dimensional Dirac operators to be studied in this manuscript, we introduce the following basic assumption.

\begin{hypothesis} \lb{h3.1}  
Let $n \in \bbN$, $n\geq 2$, set $N=2^{\lfloor(n+1)/2\rfloor}$, and denote by 
$\alpha_j$, $1\leq j\leq n$, $\alpha_{n+1} := \beta$,  $n+1$ anti-commuting self-adjoint $N\times N$ matrices with squares equal to $I_N$, that is, 
\begin{equation}\lb{2.1}
\alpha_j^*=\alpha_j,\quad \alpha_j\alpha_k + \alpha_k\alpha_j = 2\delta_{j,k}I_N, 
\quad 1\leq j,k\leq n+1.
\end{equation}
\end{hypothesis}

Given Hypothesis \ref{h3.1}, we introduce in $\LN$ the free massless Dirac operator as follows, 
\begin{equation}
H_0 = \alpha \cdot (-i \nabla) = \sum_{j=1}^n \alpha_j (-i \partial_j),\quad \dom(H_0) = \WoneN.  
\lb{2.2}
\end{equation} 

Employing the relations \eqref{2.1}, one observes that 
\begin{equation} 
H_0^2 = - I_N \Delta, \quad \dom\big(H_0^2\big) = \WtwoN.   \lb{2.7} 
\end{equation}

For completeness we also recall that the massive free Dirac operator in $\LN$ associated 
with the mass parameter $m > 0$ then would be of the form
\begin{equation}
H_0(m) = H_0 + m \, \beta, \quad \dom(H_0(m)) = \WoneN, \; m > 0, \; \beta = \alpha_{n+1}, 
\lb{3.3a} 
\end{equation}
but we will primarily study the massless case $m=0$ in this paper.

In the special one-dimensional case $n=1$, one can choose $\alpha_1$ to be one of the three Pauli matrices. Similarly, in the massive case, $\beta$ would typically be a second Pauli matrix (different from $\alpha_1$).

The main goal of this section is to obtain a uniform limiting absorption principle for the free $n$-dimensional massless Dirac operator in dimensions $n \geq 2$.  The method of proof employed relies on Kato's inequality (cf.\ \eqref{2.20a}, not to be confused with his distributional inequality) and the construction of an auxiliary operator $A$ that has a positive commutator with the free massless Dirac operator $H_0$. 

To set the stage for the definition of $A$ we introduce the following assumption:

\begin{hypothesis} \lb{h3.2} Let $\eta:\bbR^n\to \bbR$ denote a radial function of the form
\begin{equation}
\eta(p) = h(|p|),\quad p\in \bbR^n,
\end{equation}
where $h : [0,\infty) \to [0,1]$ is defined by
\begin{equation}\lb{3.4a} 
h (r) := 
\begin{cases}
r, & r \in [0,1/2), \\
k (r), & r \in [1/2,1),  \\
1, & r \in [1,\infty),
\end{cases}
\end{equation}
and the function $k : [1/2,1)\to [0,\infty)$ is nondecreasing and chosen so that 
$\eta \in C^{\infty}(\bbR^n\backslash\{0\})$. 
\end{hypothesis}

The operator of multiplication by the independent variable $x_j$ in $\LN$ will be denoted by $Q_j$ and we shall write
\begin{equation}
Q=(Q_1,\ldots,Q_n).
\end{equation}
Given Hypotheses \ref{h3.1} and \ref{h3.2}, we introduce in $\LN$ the operator 
\begin{align}
& A = \frac{1}{2} \Big[(\alpha \cdot (-i \nabla))(- \Delta)^{-1} \eta (-i \nabla)((-i \nabla) \cdot Q)   \no \\
& \hspace*{11mm} + (Q \cdot (-i \nabla)) (- \Delta)^{-1} \eta (-i \nabla)
(\alpha \cdot (-i \nabla)) \Big],      \lb{2.6a} \\ 
& \dom(A)=\cD_0 (\bbR^n),    \no
\end{align}
where
\begin{equation}
\cD_0(\bbR^n) = \cF^{-1} \Big([C_0^{\infty}(\bbR^n\backslash \{0\})]^N\Big) 
\subset \SN. \lb{2.10} 
\end{equation}  
That is, $\cD_0(\bbR^n)$ consists of functions whose Fourier transforms have compact support and no support in a neighborhood of $p=0$.

In addition, we introduce 
\begin{equation}
B = \eta (-i \nabla) \in \cB\Big(\LN\Big),    \lb{2.7a} 
\end{equation} 
defined via the spectral theorem. 

We emphasize that the necessity of including the factor 
$\eta (-i \nabla)$ in the definition \eqref{2.6a} of $A$ considerably complicates matters as at various 
occasions we will have to consider $B^{-1}$ in combination with other operators. The 
corresponding massive case, $m > 0$, as treated by Iftimovici and M{\u a}ntoiu \cite{IM99}, corresponds to the bounded operator $\big(- \Delta + m^2 I\big)^{-1}$ instead of $(- \Delta)^{-1} $ in $A$,  
and hence does not require the introduction of the term $\eta (-i \nabla)$ in \eqref{2.6a}. Naturally, this 
considerably influences some technical aspects in the proofs of this section.

For any $n \in \bbN$, we also introduce the scale of weighted $L^2$-spaces, 
\begin{equation}
L^2_s(\bbR^n) = \big\{ f \in L^2(\bbR^n)\, |\, \langle Q\rangle^s f \in L^2(\bbR^n) \big\},\quad L^2_{-s}(\bbR^n) = \big[L^2_s(\bbR^n)\big]^*,\quad s \geq 0.
\end{equation}

\begin{proposition}\lb{p3.3}
Assume Hypotheses \ref{h3.1} and \ref{h3.2}. ~Then the operator $A$ is essentially self-adjoint on 
$\cD_0(\bbR^n)$ and 
\begin{equation}\lb{2.11zzz}
\LoneN \subseteq \dom\big(\ol{A}\big).
\end{equation}
\end{proposition}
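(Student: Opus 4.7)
The plan is to pass to momentum space via the Fourier transform $\cF$ and identify $\widehat{A}:=\cF A \cF^{-1}$ as a symmetric first-order differential operator of precisely the type covered by Theorem~\ref{t2.4}. Using $\cF(-i\partial_{x_j})\cF^{-1} = p_j$, $\cF x_j\cF^{-1} = i\partial_{p_j}$, $\cF\eta(-i\nabla)\cF^{-1}=\eta(p)$, and $\cF(-\Delta)^{-1}\cF^{-1} = |p|^{-2}$, together with the Leibniz rule applied to $Q\cdot(-i\nabla)$, a direct calculation gives, on $\cD:=\cF(\cD_0(\bbR^n))=[C_0^\infty(\bbR^n\setminus\{0\})]^N$,
\begin{equation*}
\widehat{A} \;=\; i\sum_{j=1}^n G_j(p)\,\partial_{p_j} \;+\; i\sum_{j=1}^n \partial_{p_j}\,G_j(p),
\qquad
G_j(p) \;:=\; \frac{\eta(p)\, p_j\,(\alpha\cdot p)}{2|p|^2}.
\end{equation*}
Since $\eta(p),\,p_j,\,|p|^{-2}$ are real scalars and $\alpha\cdot p$ is a self-adjoint $N\times N$ matrix, each $G_j(p)$ is self-adjoint. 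Hence $\widehat{A}$ matches the form \eqref{2.9} with the choice $F_j:=-G_j$, and $F_j^* = -G_j = F_j$.

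Next I would verify Hypothesis \ref{h2.1} with this $\cD$ and these $F_j$. Boundedness of the $F_j$ on $\LN$ is elementary: for $|p|\ge 1$ one has $\eta(p)=1$ and $|G_j(p)|\le 1/2$; for $|p|\le 1/2$ the factor $\eta(p)=|p|$ cancels a power of $|p|^{-2}$, yielding $|G_j(p)|\le |p|/2$; on the intermediate annulus $G_j$ is continuous and bounded. The invariance $F_j(\cD), F_j^*(\cD)\subseteq \cD$ is clear, since $G_j\in C^\infty(\bbR^n\setminus\{0\})^{N\times N}$ preserves $C_0^\infty(\bbR^n\setminus\{0\})$. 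That $\cD$ is a core for $\nabla$ on $\WoneN$ uses $n\ge 2$, where $\{0\}$ has vanishing $W^{1,2}$-capacity so that $C_0^\infty(\bbR^n\setminus\{0\})$ is dense in $W^{1,2}(\bbR^n)$. Finally, $[-i\partial_{p_k}, G_j]$ extends to a bounded operator because a direct differentiation in spherical coordinates shows $|\partial_{p_k}G_j(p)|$ is $O(|p|^{-1})$ as $|p|\to\infty$ and $O(1)$ as $|p|\to 0$; terms such as $p_jp_kp_\ell/|p|^3$ are angular functions, hence essentially bounded.

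With Hypothesis \ref{h2.1} verified, Theorem \ref{t2.4} yields essential self-adjointness of $\widehat{A}$ on $\cD$, and unitary transport by $\cF^{-1}$ produces essential self-adjointness of $A$ on $\cD_0(\bbR^n)$. For the inclusion \eqref{2.11zzz}, I would invoke the second conclusion of Proposition \ref{p2.2}, which gives $\WoneN\subseteq \dom(\overline{\widehat A})$; because $\cF x_j\cF^{-1} = i\partial_{p_j}$, the Fourier transform restricts to a unitary isomorphism from $L^2_1(\bbR^n)$ onto $W^{1,2}(\bbR^n)$, and pulling back through $\cF$ delivers $\LoneN\subseteq\dom(\overline{A})$.

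The principal technical obstacle is the Lipschitz regularity of $G_j$ near $p=0$: the naive symbol $|p|^{-2}$ of $(-\Delta)^{-1}$ is singular there, and its derivatives would blow up like $|p|^{-3}$. The cutoff $\eta$ with $\eta(p)=|p|$ for small $|p|$ from Hypothesis \ref{h3.2} is introduced precisely to tame this singularity, keeping both $G_j$ and its distributional first derivatives essentially bounded, which is exactly what Theorem \ref{t2.4} demands. This is the fundamental difference from the massive setting of \cite{IM99}, where the symbol $(|p|^2+m^2)^{-1}$ of $(-\Delta+m^2 I)^{-1}$ is already smooth and bounded, so no regularizing factor $\eta(-i\nabla)$ is needed in the conjugate operator.
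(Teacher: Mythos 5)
Your proposal is correct and follows the same route as the paper: conjugate by $\cF$, read off $\widehat{A}$ as a symmetric first-order differential operator with coefficients $F_j(p)=\tfrac12(\alpha\cdot p)|p|^{-2}\eta(p)\,p_j$ acting on the core $[C_0^\infty(\bbR^n\setminus\{0\})]^N$, verify Hypothesis~\ref{h2.1} (boundedness of $F_j$ via the choice of $\eta$, invariance of the core, bounded commutators $[-i\partial_k,F_j]$, and $C_0^\infty(\bbR^n\setminus\{0\})$ being a core for $\nabla$ when $n\ge2$), and then apply Theorem~\ref{t2.4} together with Proposition~\ref{p2.2} and the unitary $\cF\colon L^2_1\to W^{1,2}$ to get both assertions. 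The overall sign in your momentum-space expression differs by a Fourier-convention factor from the paper's \eqref{3.12a}, which is immaterial since $\pm L$ are simultaneously essentially self-adjoint, and your bookkeeping $F_j:=-G_j$ handles it consistently.
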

\begin{proof}
Considering $\hatt A = \cF A \cF^{-1}$, $\hatt A$ reads 
\begin{align}
\begin{split} 
& \hatt A = \frac{1}{2} \Big[(\alpha \cdot Q)| Q |^{-2}\eta( Q )(Q \cdot (-i \nabla)) 
+ ( (-i \nabla) \cdot Q) | Q |^{-2}\eta(Q) (\alpha \cdot Q) \Big],\\ 
& \dom\big(\hatt A\big)= [C_0^{\infty} (\bbR^n \backslash \{0\})]^N.  \lb{3.12a}
\end{split}
\end{align}
In this context we note that $C_0^{\infty}(\bbR^n \backslash \{0\})$, $n \in \bbN$, $n \geq 2$, 
is a core for $\nabla$ (see, e.g., \cite[p.~97]{Fa75}).
Therefore, the operator $\hatt A$ is a first-order differential operator of the form introduced in \eqref{2.9} with $F_j$, $1\leq j\leq n$,  defined by 
\begin{equation}\label{def_Fj_special}
F_j(p)= \frac12 (\alpha \cdot p) |p|^{-2} \eta(p) p_j, \quad p\in \bbR^n\backslash\{0\}, 
\; 1 \leq j \leq n.
\end{equation}
Given Hypothesis \ref{h3.2} on $\eta$, one can check that $F_j$ leaves the core $C_0^{\infty}(\bbR^n \backslash \{0\})$ invariant (in particular, the $|x|$-behavior of $\eta(Q)$ near $x = 0$ 
is not felt by functions in $C_0^{\infty}(\bbR^n \backslash \{0\})$). Moreover, Hypothesis \ref{h3.2} 
also guarantees that the partial derivatives of $F_j$ are bounded functions, and therefore 
\begin{equation}\label{F_j_partial}
\overline{\big[-i \partial_k, F_j\big]}\in \cB\Big(\LN\Big),\quad 1\leq j,k \leq n.
\end{equation}
Thus, the assumptions of Hypothesis \ref{h2.1} are satisfied, and therefore, Theorem \ref{t2.4} implies that the operator $\hatt A$ is essentially self-adjoint. The inclusion \eqref{2.11zzz} 
then follows from \eqref{a.10s}. 
\end{proof}

In the next result, we compute the commutator of $H_0$ with $A$.

\begin{proposition}\lb{p3.4}
Assume Hypotheses \ref{h3.1} and \ref{h3.2}, then
\begin{equation}\lb{A.31z}
i[A,H_0]\psi = B\psi, \quad \psi\in \cD_0 (\bbR^n). 
\end{equation}
Therefore, for each $\psi_1,\psi_2\in \LoneN\cap \WoneN$, 
\begin{equation}\lb{A.31zz}
(\psi_1,B\psi_2)_{\LNs} = \big(iH_0\psi_1,\ol A\psi_2\big)_{\LNs} - \big(i\ol A\psi_1,H_0\psi_2\big)_{\LNs}.
\end{equation}
\end{proposition}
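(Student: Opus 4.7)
The plan is to reduce \eqref{A.31z} to an algebraic commutator computation on the Fourier side, and then to deduce the sesquilinear form identity \eqref{A.31zz} by a density argument, relying on the essential self-adjointness of $A$ established in Proposition \ref{p3.3} together with the self-adjointness of $H_0$.

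On the Fourier side, $H_0$ becomes multiplication by $M := \alpha \cdot p$, the bounded operator $B = \eta(-i\nabla)$ becomes multiplication by $\eta(p) I_N$, and $\hatt A$ is given by \eqref{3.12a}. Since $\cF \cD_0(\bbR^n) = [C_0^{\infty}(\bbR^n\bs\{0\})]^N$, the singular factor $|p|^{-2}$ is harmless on this domain, and the operators $A$ and $H_0$ each map $\cD_0(\bbR^n)$ into itself. Setting $U := |p|^{-2}\eta(p)$ and $D := p \cdot \nabla_p$, the relations $Q \cdot (-i\nabla) = -iD$ and $(-i\nabla) \cdot Q = -i(D+n)$ (valid on the Fourier side) cast \eqref{3.12a} in the form $\hatt A = -\frac{i}{2}\bigl[MUD + (D+n)UM\bigr]$. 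The commutator with $M$ relies on three facts: the Clifford relations \eqref{2.1} give $M^2 = |p|^2 I_N$; the scalar factor $U$ commutes with $M$, so $MUM = UM^2 = \eta(p) I_N$; and $[D, p_j] = p_j$ yields $[D, M] = M$. A short Leibniz-rule calculation then gives $[MUD, M] = M[UD, M] = MUM = \eta(p) I_N$ and $[(D+n)UM, M] = [D, M]UM = MUM = \eta(p) I_N$. Summing and halving, $i[\hatt A, M] = \eta(p) I_N = \hatt B$, which after inverse Fourier transform is precisely \eqref{A.31z}.

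For \eqref{A.31zz}, first take $\psi_1 \in \LoneN \cap \WoneN$ and $\psi_2 \in \cD_0(\bbR^n)$. Both $A H_0 \psi_2$ and $H_0 A \psi_2$ are well-defined because $\cD_0(\bbR^n)$ is invariant under $A$ and $H_0$. Since $\LoneN \subseteq \dom(\ol A)$ by Proposition \ref{p3.3}, essential self-adjointness of $A$ on $\cD_0(\bbR^n)$ gives $(\psi_1, A\phi)_{\LNs} = (\ol A \psi_1, \phi)_{\LNs}$ for every $\phi \in \cD_0(\bbR^n)$, and self-adjointness of $H_0$ on $\WoneN$ permits the analogous transfer of $H_0$ across the pairing. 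Pairing \eqref{A.31z} with $\psi_1$ and using these transfer relations yields \eqref{A.31zz} for $\psi_2 \in \cD_0(\bbR^n)$. To extend to general $\psi_2 \in \LoneN \cap \WoneN$, I would invoke density of $\cD_0(\bbR^n)$ in $\LoneN \cap \WoneN$ equipped with the intersection norm: on the Fourier side this amounts to density of $[C_0^{\infty}(\bbR^n\bs\{0\})]^N$ in $[W^{1,2}(\bbR^n)]^N \cap [L_1^2(\bbR^n)]^N$, which for $n \ge 2$ follows from a standard cutoff-and-mollify procedure (the point $\{0\}$ has zero $W^{1,2}$-capacity when $n \ge 2$). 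Both sides of \eqref{A.31zz} are continuous in $\psi_2$ in this topology, since $B \in \cB(\LNs)$ and, by the proof of Proposition \ref{p2.2}, $\ol A$ restricts to a bounded operator $\LoneN \to \LN$, while $H_0$ is bounded $\WoneN \to \LN$.

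The main obstacle is the commutator computation itself: because the cutoff $\eta(-i\nabla)$ sits \emph{inside} the definition of $A$ (rather than being applied externally, as in the massive case), one must simultaneously track the noncommutativity of $D$ with $M$ and the reordering constant $-in$ in $(-i\nabla)\cdot Q = Q\cdot(-i\nabla) - in$. The symmetrization already built into \eqref{3.12a} is precisely what causes these contributions to collapse into the clean expression $\eta(p) I_N$.
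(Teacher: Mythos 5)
Your proof of \eqref{A.31z} is correct and is algebraically the same as the paper's, just repackaged: the paper applies the Leibniz rule to get $[\hatt H_0,\hatt A]=\tfrac12\hatt H_0 U[\hatt H_0,Q\cdot(-i\nabla)]+\tfrac12[\hatt H_0,(-i\nabla)\cdot Q]U\hatt H_0$ with $U=|Q|^{-2}h(|Q|)$, then uses $[\hatt H_0,Q\cdot(-i\nabla)]=[\hatt H_0,(-i\nabla)\cdot Q]=i\hatt H_0$ and $\hatt H_0 U\hatt H_0=\eta(p)I_N$, while your bookkeeping via $M=\alpha\cdot p$, $U$, and the Euler operator $D=p\cdot\nabla_p$ with $[D,M]=M$, $M^2=|p|^2 I_N$ produces the identical cancellations. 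For \eqref{A.31zz} the paper's displayed proof stops after the commutator identity on $\cD_0(\bbR^n)$ and leaves the ``Therefore'' unargued; your density argument is a correct way to supply it. The key ingredients you invoke are exactly the right ones: essential self-adjointness of $A$ on $\cD_0(\bbR^n)$ together with $\LoneN\subseteq\dom(\ol A)$ (Proposition \ref{p3.3}) to move $\ol A$ across the pairing, self-adjointness of $H_0$ on $\WoneN$ to move $H_0$ across, boundedness of $\ol A\vert_{\LoneN}\colon\LoneN\to\LN$ (which follows from the formula for $\ol L$ restricted to $\WoneN$ in the proof of Proposition \ref{p2.2}), and density of $\cD_0(\bbR^n)$ in $\LoneN\cap\WoneN$ in the intersection norm. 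The density claim is genuine and indeed requires $n\geq 2$: on the Fourier side it is the statement that $[C_0^\infty(\bbR^n\setminus\{0\})]^N$ is dense in $\WoneN\cap\LoneN$, which holds since a point has zero $W^{1,2}$-capacity for $n\geq 2$ (this is the same fact the paper cites from Faris when it asserts $C_0^\infty(\bbR^n\setminus\{0\})$ is a core for $\nabla$), plus the elementary observation that cutting near zero and mollifying also respects the $L^2_1$ weight. So the argument is sound, and it fills in exactly the step the paper leaves implicit.
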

\begin{proof}
We compute the commutator of $\widehat{H}_0 = \alpha\cdot Q$ and 
\begin{align}\lb{3.15zz}
\widehat{A} = \frac{1}{2} \Big( \widehat{H}_0 |Q|^{-2}h(|Q|) (Q\cdot (-i\nabla)) + ((-i\nabla)\cdot Q)\widehat{H}_0|Q|^{-2}h(|Q|)\Big)
\end{align}
on $[C_0^{\infty}(\bbR^n\backslash\{0\})]^N$.  To this end, one observes that
\begin{align}
\big[\widehat{H}_0,Q\cdot (-i\nabla)\big] &= \big[\widehat{H}_0,(-i\nabla)\cdot Q\big]\no\\
&= \sum_{j,k=1}^n [\alpha_jQ_j,Q_k(-i\partial_k)]\no\\
&= -i\sum_{j=1}^n Q_j\alpha_j[Q_j,\partial_j]\no\\
&= i\widehat{H}_0
\end{align}
on $[C_0^{\infty}(\bbR^n\backslash\{0\})]^N$.  As a result, one obtains
\begin{align}
\big[\widehat{H}_0,\widehat{A}\big] &= 2^{-1}\widehat{H}_0|Q|^{-2}h(|Q|)\big[\widehat{H}_0,Q\cdot(-i\nabla)\big] + 2^{-1}\big[\widehat{H}_0,(-i\nabla)\cdot Q\big]\widehat{H}_0|Q|^{-2}h(|Q|)\no\\
&= (i/2) \widehat{H}_0|Q|^{-2}h(|Q|)\widehat{H}_0 + (i/2) \widehat{H}_0\widehat{H}_0|Q|^{-2}h(|Q|)\no\\
&=ih(|Q|)\no\\
&=i\widehat{B}
\end{align}
on $[C_0^{\infty}(\bbR^n\backslash\{0\})]^N$, where
\begin{equation}
\widehat{B} := h(|Q|).
\end{equation}
\end{proof}

Assuming Hypothesis \ref{h3.2}, the square root $B^{1/2}$ is defined by the spectral theorem via
\begin{equation}
B^{1/2} = \eta(-i \nabla)^{1/2} \in \cB\Big(\LN\Big).
\end{equation}

The next result requires Kato's inequality in $\bbR^n$, $n \geq 2$, which is of the form (cf., e.g., \cite[p.~19]{BE11}, \cite{He77})
\begin{align}
\begin{split} 
K_n \int_{\bbR^n} d^n x \, |x|^{-1}|f(x)|^2 \leq \int_{\bbR^n} d^n p \, |p| \big|\widehat f(p)\big|^2,\quad f \in \dom((-\Delta)^{1/4}), \; n \in \bbN, \; n \geq 2,& \lb{2.20a}
\end{split} 
\end{align}
for some constants $K_n > 0$. In particular,
\begin{align}
K_n \int_{\bbR^n} d^n x \, |x|^{-1} |f(x)|^2 \leq 
\int_{\bbR^n} d^n p \, \langle p \rangle \big|\widehat f(p)\big|^2, 
\quad \widehat f \in L^2_{1/2}(\bbR^n), \; n \in \bbN, \; n \geq 2,& \lb{2.21a}
\end{align}
equivalently, and in the form to be used below,
\begin{align}
K_n \int_{\bbR^n} d^n p \, |p|^{-1} \big|\widehat f(p)\big|^2 \leq 
\int_{\bbR^n} d^n x\, \langle x \rangle |f(x)|^2, 
\quad f \in L^2_{1/2}(\bbR^n), \; n \in \bbN, \; n \geq 2.&    \lb{2.22a}
\end{align}

\begin{proposition}\lb{p3.6}
Assume Hypothesis \ref{h3.2}. ~Then
\begin{equation}
B^{-1/2}\in \cB\Big(\LonehN,\LN\Big),
\end{equation}
that is, there exists 
$C_1>0$ such that
\begin{equation}\lb{3.26zz}
\|B^{-1/2} \psi\|_{\LNs} \leq C_1\|\psi\|_{\LonehNs},\quad \psi \in \LonehN.
\end{equation}
\end{proposition}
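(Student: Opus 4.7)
The plan is to translate the claim into a Fourier-side estimate. Since $B = \eta(-i\nabla)$ is defined by the spectral theorem and $\eta(p)=h(|p|)$, the operator $B^{-1/2}$ is realized on the Fourier side as multiplication by $h(|p|)^{-1/2}$. Thus Plancherel reduces the claimed inequality to
\begin{equation}
\int_{\bbR^n} d^n p \, h(|p|)^{-1} \big|\widehat \psi(p)\big|^2 \leq C_1^2 \int_{\bbR^n} d^n x \, \langle x \rangle \, |\psi(x)|^2
\no
\end{equation}
for all $\psi \in \LonehN$ (with the integrals interpreted componentwise, so it suffices to prove the scalar version).

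The next step is a pointwise bound on $h^{-1}$. From Hypothesis \ref{h3.2}, $h(r)=r$ for $r\in[0,1/2)$, $h$ is nondecreasing on $[1/2,1)$ with $h(1/2)=1/2$ and $h(1^-)=1$, and $h(r)=1$ for $r\ge1$. In particular $h(r)\ge 1/2$ on $[1/2,\infty)$, so
\begin{equation}
h(r)^{-1} \leq 2\, r^{-1}\,\chi_{[0,1/2)}(r) + 2\, \chi_{[1/2,\infty)}(r) \leq 2\big(r^{-1} + 1\big), \quad r>0.
\no
\end{equation}
Substituting this bound yields
\begin{equation}
\int_{\bbR^n} d^n p \, h(|p|)^{-1} \big|\widehat \psi(p)\big|^2
\leq 2\int_{\bbR^n} d^n p \, |p|^{-1}\big|\widehat\psi(p)\big|^2 + 2\,\big\|\widehat\psi\big\|_{L^2(\bbR^n)}^2 .
\no
\end{equation}

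The first term is precisely what Kato's inequality \eqref{2.22a} controls: since $\psi \in L^2_{1/2}(\bbR^n)$, one obtains
\begin{equation}
\int_{\bbR^n} d^n p \, |p|^{-1} \big|\widehat\psi(p)\big|^2 \leq K_n^{-1} \int_{\bbR^n} d^n x \, \langle x \rangle |\psi(x)|^2.
\no
\end{equation}
The second term is handled by Plancherel together with the trivial bound $1\leq \langle x\rangle$:
\begin{equation}
\big\|\widehat\psi\big\|_{L^2(\bbR^n)}^2 = \|\psi\|_{L^2(\bbR^n)}^2 \leq \int_{\bbR^n} d^n x \, \langle x\rangle |\psi(x)|^2 = \|\psi\|_{L_{1/2}^2(\bbR^n)}^2.
\no
\end{equation}
Combining these with $C_1^2 := 2(K_n^{-1}+1)$ and summing over the $N$ components establishes \eqref{3.26zz} and simultaneously shows $\LonehN \subseteq \dom\big(B^{-1/2}\big)$.

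There is no real obstacle: the only nontrivial input is Kato's inequality, which the paper has just recorded. The cosmetic point to get right is the two-regime behavior of $h$, which forces the split $h^{-1}\lesssim |p|^{-1}+1$; without the constant $1$ piece the estimate would fail for large $|p|$, and without the $|p|^{-1}$ piece it would fail near the origin, so both Kato's inequality and the trivial bound $1\leq \langle x\rangle$ are essential.
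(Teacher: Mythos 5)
Your proof is correct and follows essentially the same route as the paper: a pointwise bound on $\eta^{-1}$ (you) versus $\eta^{-1/2}$ (the paper) splitting the estimate into a $|p|^{-1}$ part near the origin, controlled by Kato's inequality \eqref{2.22a}, and a bounded part away from the origin, controlled by the trivial embedding $L^2_{1/2}\hookrightarrow L^2$. The only difference is cosmetic (squared norms versus a triangle-inequality bound on $B^{-1/2}$ directly); the paper's bound $\eta(p)^{-1/2}\leq 2^{1/2}+|p|^{-1/2}$ plays exactly the role of your $h(r)^{-1}\leq 2(r^{-1}+1)$.
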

\begin{proof}
The nonnegative function $\eta(p)^{-1/2}$ of $ p    =(p_1,\ldots,p_n)$ satisfies
\begin{equation}\lb{3.22x}
\eta(p)^{-1/2} \leq 2^{1/2} + \frac{1}{|p|^{1/2}},\quad  p \neq 0,
\end{equation}
so it suffices to note that
\begin{equation}
|-i \nabla|^{-1/2}\in \cB\Big(\LonehN,\LN\Big),
\end{equation}
which follows from Kato's inequality in the form of \eqref{2.22a}, 
\begin{align}
\begin{split} 
\big\||-i \nabla|^{-1/2} \psi \big\|_{\LNs} = 
\big\||p|^{-1/2} \widehat \psi \big\|_{\LNs} \leq K_n^{-1/2} \|\psi\|_{\LonehNs},&  \\
\quad \psi\in \LonehN, \; n \geq 2.&    \lb{2.29} 
\end{split} 
\end{align} 
\end{proof}

\begin{proposition}\lb{p3.7}
Assume Hypotheses \ref{h3.1} and \ref{h3.2}. ~Then there exists a constant $C_2>0$ such that
\begin{equation}
\big\|B^{-1/2}A\psi\big\|_{\LNs}\leq C_2\|\psi\|_{\LoneNs},\quad \psi \in \dom(A)=\cD_0(\bbR^n).
\end{equation}
Thus, the operator $B^{-1/2}A:\cD_0 (\bbR^n) \to \LN$ extends to an element
\begin{equation} 
B^{-1/2}\overline{A}\in \cB\Big(\LoneN,\LN\Big).  
\end{equation} 
\end{proposition}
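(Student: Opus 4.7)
I would pass to momentum space and reduce the bound to elementary pointwise estimates plus Kato's inequality \eqref{2.22a}, closely paralleling Proposition \ref{p3.6}. Writing $F_j(p)=\tfrac{1}{2}(\alpha\cdot p)|p|^{-2}\eta(p)p_j$ as in \eqref{def_Fj_special}, one has $F_j=F_j^*$ because $\alpha\cdot p$ is self-adjoint and the remaining factors are real scalars, so by \eqref{3.12a}, on $\cF(\cD_0(\bbR^n))=[C_0^{\infty}(\bbR^n\backslash\{0\})]^N$,
\begin{equation*}
\widehat{A}=\sum_{j=1}^n F_j(-i\partial_j)+\sum_{j=1}^n(-i\partial_j)F_j=2\sum_{j=1}^n F_j(-i\partial_j)+\sum_{j=1}^n d_j(F_j),
\end{equation*}
with $d_j(F_j)=-i(\partial_j F_j)$ the bounded multiplication operator from \eqref{F_j_partial}; boundedness of $\partial_k F_j$ on $\bbR^n\backslash\{0\}$ is immediate from a direct calculation in each of the three regions of \eqref{3.4a}.

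The key pointwise cancellation is that
\begin{equation*}
h(|p|)^{-1/2}F_j(p)=\tfrac{1}{2}h(|p|)^{1/2}|p|^{-2}(\alpha\cdot p)p_j
\end{equation*}
is uniformly bounded by $1/2$ on $\bbR^n\backslash\{0\}$: since $(\alpha\cdot p)^2=|p|^2 I_N$ gives $\|\alpha\cdot p\|_{\bbC^{N\times N}}=|p|$, while $h(|p|)\leq 1$ and $|p|^{-1}|p_j|\leq 1$. Combined with $\|(-i\partial_j)\hat\psi\|_{\LNs}=\|x_j\psi\|_{\LNs}\leq\|\psi\|_{\LoneNs}$, this majorises the first-order contribution to $h(|Q|)^{-1/2}\widehat{A}\hat\psi$ by a multiple of $\|\psi\|_{\LoneNs}$. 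For the zeroth-order remainder $\sum_j h(|Q|)^{-1/2}d_j(F_j)\hat\psi$ I would invoke \eqref{3.22x}, $h(|p|)^{-1/2}\leq 2^{1/2}+|p|^{-1/2}$, splitting the estimate into a bounded piece trivially dominated by $\|\psi\|_{\LNs}$ and a singular piece which, after using the $L^{\infty}$-bound on $d_j(F_j)$, reduces to $\||p|^{-1/2}|\hat\psi|\|_{\LNs}$ and is handled by Kato's inequality \eqref{2.22a} to give $K_n^{-1/2}\|\psi\|_{\LonehNs}\leq K_n^{-1/2}\|\psi\|_{\LoneNs}$. Adding all contributions yields the desired norm bound on $\cD_0(\bbR^n)$.

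For the extension statement, I would use that $\cD_0(\bbR^n)$ is dense in $\LoneN$: via the Fourier transform this is equivalent to density of $[C_0^{\infty}(\bbR^n\backslash\{0\})]^N$ in $\WoneN$, which holds for $n\geq 2$ because a single point has vanishing $W^{1,2}$-capacity (for $n=2$ one uses a standard logarithmic cut-off). Combined with the inclusion \eqref{2.11zzz}, the estimate extends by continuity to $B^{-1/2}\overline{A}\in\cB(\LoneN,\LN)$. The main obstacle is the coincidence of the singularity of $h(|p|)^{-1/2}\sim|p|^{-1/2}$ at $p=0$ with the zeroth-order term $d_j(F_j)$: no algebraic cancellation is available there, and Kato's inequality is therefore indispensable—this is precisely where the hypothesis $n\geq 2$ enters.
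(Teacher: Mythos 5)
Your proof is correct and follows essentially the same route as the paper. Your decomposition $\widehat{A}=2\sum_j F_j(-i\partial_j)+\sum_j d_j(F_j)$ is algebraically identical to the paper's $\widehat{A}=\widehat{A}_\ell+\tfrac{1}{2}\overline{\widehat{A}_0}$ (one has $\widehat{A}_\ell=2\sum_j F_j(-i\partial_j)$ and $\tfrac{1}{2}\widehat{A}_0=\sum_j d_j(F_j)$), the first-order part is killed by the same pointwise cancellation $\|h(|p|)^{1/2}|p|^{-2}(\alpha\cdot p)p_j\|_{\bbC^{N\times N}}\le 1$, and the zeroth-order part rests on the same Kato-inequality estimate; the only cosmetic difference is that you re-derive the bound on $B^{-1/2}$ from \eqref{3.22x} and \eqref{2.22a} inline, whereas the paper cites it as Proposition~\ref{p3.6}.
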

\begin{proof}
With $\widehat{A}$ defined by \eqref{3.15zz}, write
\begin{equation}\lb{3.35v}
\widehat{A} = \frac{1}{2}\big( \widehat{A}_{\ell} + \widehat{A}_r\big),
\end{equation}
where
\begin{align}
\widehat{A}_{\ell} &= \widehat{H}_0|Q|^{-2}h(|Q|) (Q\cdot (-i\nabla)),\\
\widehat{A}_r &= ((-i\nabla)\cdot Q) \widehat{H}_0|Q|^{-2}h(|Q|),
\end{align}
on $[C_0^{\infty}(\bbR^n\backslash\{0\})]^N$.  Then one observes that
\begin{align}
\widehat{A}_r &= -i\sum_{j=1}^n \partial_jQ_j \widehat{H}_0|Q|^{-2}h(|Q|)\no\\
&= -i\sum_{j=1}^n\big[\partial_j,Q_j\widehat{H}_0|Q|^{-2}h(|Q|)\big] + \widehat{A}_{\ell}.
\end{align}
One notes that the operator $Q_j\hatt H_0 |Q|^{-2} h(|Q|)$ represents the operator $F_j(Q)$, where 
$F_j$ is defined according to \eqref{def_Fj_special}. Hence, by \eqref{F_j_partial} it follows that the commutator 
$[\partial_j, Q_j \hatt H_0 |Q|^{-2} h(|Q|)]$  extends to a bounded operator for every $j=1,\dots, n$. Therefore, the operator $\hatt A_0$ defined by
\begin{equation}\lb{3.39v}
\widehat{A}_0:= -i\sum_{j=1}^n\big[\partial_j,Q_j\widehat{H}_0|Q|^{-2}h(|Q|)\big],\quad \dom(\widehat{A}_0)=[C_0^{\infty}(\bbR^n\backslash\{0\})]^N, 
\end{equation}
extends to a bounded operator
\begin{equation}
\overline{\widehat{A}_0}: [L^2(\bbR^n)]^N\to [L^2(\bbR^n)]^N
\end{equation}
commuting with $\widehat{B} = h(|Q|)$.  In particular, one infers that
\begin{equation}\lb{3.39xx}
\widehat{A} = \widehat{A}_{\ell} + \frac{1}{2}\overline{\widehat{A}_0}: [C_0^{\infty}(\bbR^n\backslash\{0\})]^N\to [L^2(\bbR^n)]^N.
\end{equation}
Writing
\begin{equation}
A_{\ell} = (\alpha \cdot (-i \nabla)    )(- \Delta)^{-1} \eta(-i \nabla)((-i \nabla) \cdot  Q   ),
\end{equation}
one obtains
\begin{align}
\big\|B^{-1/2}A_{\ell}\psi\big\|_{\LNs} &= \big\|(\alpha \cdot (-i \nabla))(- \Delta)^{-1} \eta(-i \nabla)^{1/2}((-i \nabla) \cdot  Q   )\psi\big\|_{\LNs}    \no\\
& \leq C_{\ell}\|\psi\|_{\LoneNs}, \quad \psi\in \cD_0 (\bbR^n),\lb{3.40zz}
\end{align}
for an appropriate constant $C_{\ell}>0$, by the (bounded) functional calculus for self-adjoint operators.  Finally, one uses \eqref{3.40zz}, the fact that $\overline{\widehat{A}_0}$ commutes with $\widehat{B}$, boundedness of $\overline{\widehat{A}_0}$, and Proposition \ref{p3.6}, to obtain
\begin{align}
\big\|B^{-1/2}A\psi\big\|_{\LNs} &\leq \big\|B^{-1/2}A_{\ell}\psi\big\|_{\LNs} 
+ 2^{-1}\Big\| \overline{\widehat{A}_0}\widehat{B}^{-1/2}\widehat{\psi}\Big\|_{\LNs} 
\no\\
&\leq C_2 \|\psi\|_{\LoneNs}, \quad \psi\in \dom(A),
\end{align}
for an appropriate constant $C_2>0$.
\end{proof}

Next, we investigate the commutator of $A$ with $B$. 

\begin{proposition}\lb{p3.8}
Assume Hypotheses \ref{h3.1} and \ref{h3.2}.  Then $B$ leaves the domain of $\overline{A}$ invariant, and there exists $K\in \cB\Big(\LN\Big)$, which commutes with $B$, such that
\begin{equation}\lb{A.19}
i[B,\overline{A}]\psi = KB\psi,\quad \psi\in \dom(\overline{A}).
\end{equation}
Thus, $i[B,\overline{A}]$ extends to a bounded operator in $\cB\Big(\LN\Big)$.

Moreover, if 
$\psi_1,\psi_2\in \LoneN$, then
\begin{equation}\lb{A.20}
(\psi_1, KB\psi_2)_{\LNs} = (i\ol A\psi_1,B\psi_2)_{\LNs} 
- (iB\psi_1,\ol A\psi_2)_{\LNs}.
\end{equation}
\end{proposition}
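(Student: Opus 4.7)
The plan is to Fourier transform everything and compute the commutator on the dense subspace $[C_0^{\infty}(\bbR^n\setminus\{0\})]^N = \cF \cD_0(\bbR^n)$, where $\widehat{B} = h(|Q|)$ becomes a scalar multiplication operator and $\widehat{A}$ has the explicit form \eqref{3.15zz}. The advantage is that $\widehat{B}$ commutes pointwise with every matrix-valued function of $Q$ appearing in the decomposition $\widehat{A} = \tfrac12(\widehat{A}_\ell + \widehat{A}_r)$, so the problem reduces to commuting $h(|Q|)$ with the first-order operators $Q\cdot(-i\nabla)$ and $(-i\nabla)\cdot Q$ that sit inside $\widehat{A}_\ell$ and $\widehat{A}_r$.

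A direct calculation based on $[h(|Q|), -i\partial_j] = i h'(|Q|)Q_j/|Q|$ yields
\begin{equation*}
[h(|Q|), Q\cdot(-i\nabla)] = [h(|Q|), (-i\nabla)\cdot Q] = i|Q| h'(|Q|)
\end{equation*}
on $[C_0^{\infty}(\bbR^n\setminus\{0\})]^N$, and substituting back into \eqref{3.15zz} (with one factor of $|Q|$ cancelling) produces
\begin{equation*}
i[\widehat{B}, \widehat{A}] = -(\alpha\cdot Q)|Q|^{-1} h'(|Q|)\cdot h(|Q|).
\end{equation*}
This identifies $K$ through the matrix-valued Fourier symbol $\widehat{K}(p) = -(\alpha\cdot p)|p|^{-1} h'(|p|)$ for $p\neq 0$, and $\widehat{K}(0):=0$.

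The main obstacle will be verifying that $\widehat{K}$ is genuinely bounded despite the singular factor $|p|^{-1}$; this is where the piecewise shape of $h$ in Hypothesis \ref{h3.2} is decisive. By the Clifford relations \eqref{2.1}, $(\alpha\cdot p)^2 = |p|^2 I_N$, so $(\alpha\cdot p)|p|^{-1}$ is unitary for $p\neq 0$ and consequently $\|\widehat{K}(p)\|= |h'(|p|)|$. On $[0,1/2)$ one has $h'\equiv 1$, on $[1/2,1]$ the derivative $h'=k'$ is bounded by the smoothness requirement on $k$ in Hypothesis \ref{h3.2}, and on $[1,\infty)$ one has $h'\equiv 0$. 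Therefore $\widehat{K}\in\cB\big([L^2(\bbR^n)]^N\big)$, and it commutes with $\widehat{B}$ since both act as pointwise multiplications in $Q$ by commuting scalar/matrix functions.

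It remains to extend the commutator identity from $\cD_0(\bbR^n)$ to $\dom(\overline{A})$. Because $h\in C^{\infty}((0,\infty))$, multiplication by $h(|Q|)$ preserves $[C_0^{\infty}(\bbR^n\setminus\{0\})]^N$, so $B\cD_0(\bbR^n)\subseteq \cD_0(\bbR^n)\subseteq\dom(A)$. Given $\psi\in\dom(\overline{A})$, I would approximate by $\psi_m\in\cD_0(\bbR^n)$ with $\psi_m\to\psi$ and $A\psi_m\to\overline{A}\psi$ (available because $\cD_0(\bbR^n)$ is a core for $A$ by Proposition \ref{p3.3}); the identity $AB\psi_m = BA\psi_m + iKB\psi_m$, together with boundedness of $B$ and $K$ and closedness of $\overline{A}$, then forces $B\psi\in\dom(\overline{A})$ and $\overline{A}B\psi = B\overline{A}\psi + iKB\psi$, which is \eqref{A.19}. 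Finally, \eqref{A.20} follows by pairing \eqref{A.19} (applied at $\psi_2$) with $\psi_1\in\LoneN\subseteq\dom(\overline{A})$ (the inclusion being \eqref{2.11zzz}) and moving the self-adjoint operators $\overline{A}$ (self-adjoint by Proposition \ref{p3.3}) and $B$ onto the left slot, with the scalars $i$ absorbed via conjugate-linearity of the inner product in its first argument.
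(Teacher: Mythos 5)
Your proof is correct and follows essentially the same route as the paper: Fourier transform, compute $i[\widehat{B},\widehat{A}]$ explicitly on $[C_0^{\infty}(\bbR^n\setminus\{0\})]^N$ to obtain the multiplication symbol $-(\alpha\cdot p)|p|^{-1}h'(|p|)h(|p|)$, identify $K$ with symbol $-(\alpha\cdot p)|p|^{-1}h'(|p|)$, verify boundedness from the piecewise shape of $h$, and extend by closedness and the core property. The only cosmetic difference is that the paper first invokes the already-established decomposition $\widehat{A}=\widehat{A}_\ell+\tfrac12\overline{\widehat{A}_0}$ (so that only $[\widehat{B},\widehat{A}_\ell]$ needs computing, $\widehat{A}_0$ being a function of $Q$ that commutes with $\widehat{B}$), while you work directly with $\widehat{A}=\tfrac12(\widehat{A}_\ell+\widehat{A}_r)$ and compute both brackets; the paper is also terser about the boundedness of $K$ and the density/adjoint argument for \eqref{A.20}, both of which you spell out correctly.
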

\begin{proof}
Using \eqref{3.39xx}, one computes
\begin{align}
i\big[\widehat{B},\widehat{A}\big] &= i \big[\widehat{B},\widehat{A}_{\ell}+(1/2)\widehat{A}_0\big]\no\\
&= i\big[\widehat{B},\widehat{A}_{\ell}\big]\no\\
&= -\widehat{H}_0|Q|^{-2}h(|Q|)\sum_{j=1}^n Q_j[\partial_j,h(|Q|)]\no\\
&= -\widehat{H}_0h(|Q|)h'(|Q|)|Q|^{-1}
\end{align}
on $[C_0^{\infty}(\bbR^n\backslash\{0\})]^N$.  The claim now follows as 
\begin{equation}
-\widehat{H}_0h'(|Q|)|Q|^{-1}: [C_0^{\infty}(\bbR^n\backslash\{0\})]^N\to \LN
\end{equation}
extends to a bounded operator on $\LN$ by Hypothesis \ref{h3.2} and since the set $[C_0^{\infty}(\bbR^n\backslash\{0\})]^N$ is a core for $\widehat{A}$ by definition.
\end{proof}

Assuming Hypotheses \ref{h3.1} and \ref{h3.2}, define
\begin{equation}
T_{\varepsilon}^{\pm}(\lambda,\mu) = H_0 - (\lambda\pm i\mu)I_{\LNs} \mp i\varepsilon B,\quad \lambda\in \bbR,\, \mu>0,\, \varepsilon \in [0,1).
\end{equation}

\begin{proposition}\lb{p3.9}
Assume Hypotheses \ref{h3.1} and \ref{h3.2}.  For each $\lambda\in \bbR$, $\mu\in (0,\infty)$, 
$\varepsilon\in [0,1)$, $T_{\varepsilon}^{\pm}(\lambda,\mu)$ is a linear homeomorphism 
from $\WoneN$ to $\LN$ satisfying 
\begin{equation}\lb{A.84aa}
\|T_{\varepsilon}^{\pm}(\lambda,\mu)\psi\|_{\LNs}^2 \geq 
\mu^2\|\psi\|_{\LNs}^2,\quad \psi\in \WoneN.
\end{equation}
\end{proposition}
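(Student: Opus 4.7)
The plan is to write $T_\varepsilon^\pm(\lambda,\mu) = S \mp i P$ with $S := H_0 - \lambda I_{\LNs}$ self-adjoint on $\WoneN$ and $P := \mu I_{\LNs} + \varepsilon B$ bounded and self-adjoint on $\LN$ satisfying $P \ge \mu I_{\LNs}$ (the latter since $B \ge 0$ by Hypothesis~\ref{h3.2} and the spectral theorem), and then to exploit a Pythagorean-type identity obtained from the fact that $[S,P] = 0$ on $\WoneN$. The commutativity is the key observation and is immediate on the Fourier side: $\widehat{B} = h(|Q|)$ acts by scalar multiplication while $\widehat{H_0} = \alpha \cdot Q$ acts by $N \times N$ matrix multiplication, so $B$ preserves $\WoneN$ and $B H_0 \psi = H_0 B \psi$ for every $\psi \in \WoneN$.

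For $\psi \in \WoneN$, self-adjointness of $S$ and $P$ together with $S P \psi = P S \psi$ give $(S\psi, P\psi) \in \bbR$ via
$$(S\psi, P\psi) - \overline{(S\psi, P\psi)} = (S\psi, P\psi) - (P\psi, S\psi) = (\psi, SP\psi) - (\psi, PS\psi) = 0.$$
Expanding the squared norm of $S\psi \mp iP\psi$ then yields
$$\|T_\varepsilon^\pm(\lambda,\mu)\psi\|_{\LNs}^2 = \|S\psi\|_{\LNs}^2 + \|P\psi\|_{\LNs}^2 \pm 2\Im(S\psi,P\psi) = \|S\psi\|_{\LNs}^2 + \|P\psi\|_{\LNs}^2 \ge \mu^2 \|\psi\|_{\LNs}^2,$$
which is precisely \eqref{A.84aa}.

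The estimate \eqref{A.84aa} gives injectivity of $T_\varepsilon^\pm(\lambda,\mu)$ together with closedness of its range in $\LN$. Since $T_\varepsilon^\pm(\lambda,\mu)$ is closed and densely defined on $\WoneN$ as a bounded perturbation of the self-adjoint operator $H_0 - (\lambda \pm i\mu) I_{\LNs}$, its Hilbert-space adjoint is straightforward to compute: using self-adjointness of $H_0$ and $B$ together with $\overline{\mp i \varepsilon} = \pm i \varepsilon$, one finds $(T_\varepsilon^\pm(\lambda,\mu))^* = T_\varepsilon^\mp(\lambda,\mu)$. Applying the estimate to this adjoint then gives $\ker(T_\varepsilon^\mp(\lambda,\mu)) = \{0\}$, and the closed range theorem yields $\ran(T_\varepsilon^\pm(\lambda,\mu)) = \LN$. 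Boundedness of $T_\varepsilon^\pm(\lambda,\mu) : \WoneN \to \LN$ is immediate from the definition, so continuity of the inverse follows from the open mapping theorem. The main obstacle is precisely the commutativity $[B, H_0] = 0$ that kills the cross term in the expansion of the squared norm; were $B$ replaced by a nontrivial matrix-valued Fourier multiplier, the clean Pythagorean identity would break down and a more delicate argument would be needed to secure the lower bound $\mu^2 \|\psi\|^2$.
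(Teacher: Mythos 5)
Your proof is correct, but the route to the estimate \eqref{A.84aa} is genuinely different from the paper's, and the comparison is instructive. The paper groups $T_{\varepsilon}^{\pm}(\lambda,\mu)$ as $\big(H_0 - \lambda I \mp i\varepsilon B\big) \mp i\mu I$, expands the squared norm with respect to this split, and then shows the cross term is \emph{nonnegative}: self-adjointness of $H_0$ kills the $H_0 - \lambda I$ contribution to $\Re\{\ldots\}$, and nonnegativity of $B$ makes the surviving term $\varepsilon\mu(\psi, B\psi)_{\LNs} \geq 0$. You instead group as $\big(H_0 - \lambda I\big) \mp i\big(\mu I + \varepsilon B\big)$ and observe that $[B, H_0] = 0$ (immediate on the Fourier side since $\widehat{B}$ is scalar and $\widehat{H_0}$ is matrix-valued multiplication), so the cross term \emph{vanishes exactly}, yielding the Pythagorean identity $\|T_{\varepsilon}^{\pm}\psi\|^2 = \|S\psi\|^2 + \|P\psi\|^2$. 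Your identity is stronger and cleaner, but it uses more structure than is needed. In particular, your closing remark overstates the role of commutativity: if $B$ were a nontrivial matrix-valued Fourier multiplier that did \emph{not} commute with $H_0$ but remained nonnegative and self-adjoint, the paper's decomposition would still deliver \eqref{A.84aa} verbatim, whereas your Pythagorean identity would fail. So commutativity is a convenience here, not an obstacle that must be overcome. The remainder of your argument (closedness, $[T_{\varepsilon}^{\pm}]^* = T_{\varepsilon}^{\mp}$, injectivity plus closed range implying surjectivity, open mapping theorem for continuity of the inverse) is the same as the paper's and is fine.
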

\begin{proof}
Let $\lambda\in \bbR$, $\mu\in (0,\infty)$, and $\varepsilon \in [0,1)$.  The claim is evident if $\varepsilon = 0$, so we assume from now on that $\varepsilon \neq 0$.
To obtain the lower bound in \eqref{A.84aa}, let $\psi\in \WoneN$.  Since $H_0$ is self-adjoint and $B$ is nonnegative,
\begin{align}
&\Re\big\{\big([H_0 - \lambda I_{\LNs} \mp i\varepsilon B]\psi,\mp i\mu \psi\big)_{\LNs} \big\}\no\\
&\quad = \Re\big\{ \mp i\mu \big([H_0 - \lambda I_{\LNs}]\psi,\psi\big)_{\LNs} \big\} + \Re\big\{ \varepsilon \mu\big(B\psi,\psi\big)_{\LNs} \big\}\no\\
&\quad = \varepsilon \mu (\psi,B\psi)_{\LNs}\geq 0,\quad \psi\in \WoneN,
\end{align}
and therefore,
\begin{align}
&\|T_{\varepsilon}^{\pm}(\lambda,\mu)\psi\|_{\LNs}^2\no\\
& \quad = \| [H_0 - \lambda I_{\LNs} \mp i\varepsilon B]\psi \|_{\LNs}^2 
+ \mu^2 \|\psi\|_{\LNs}^2 \no\\
&\qquad + 2\, \Re\big\{\big([H_0 - \lambda I_{\LNs} \mp i\varepsilon B] 
\psi,\mp i\mu \psi\big)_{\LNs} \big\}\no\\
& \quad \geq \mu^2\|\psi\|_{\LNs}^2,\quad \psi\in \WoneN.
\end{align}

It is clear that
\begin{equation}
T_{\varepsilon}^{\pm}(\lambda,\mu)\in \cB\Big(\WoneN,\LN\Big)
\end{equation}
for each $\lambda\in \bbR$ and $\mu\in (0,\infty)$, since the operator $i\varepsilon B$ is bounded.  Moreover, $T_{\varepsilon}^{\pm}(\lambda,\mu)$ is a closed bijection.  In fact, \eqref{A.84aa} immediately implies that $T_{\varepsilon}^{\pm}(\lambda,\mu)$ is an injection with a closed range.  In addition, since $[T_{\varepsilon}^{\pm}(\lambda,\mu)]^*=T_{\varepsilon}^{\mp}(\lambda,\mu)$, $T_{\varepsilon}^{\pm}(\lambda,\mu)$ is a surjection.
\end{proof}

By Proposition \ref{p3.9}, $T_{\varepsilon}^{\pm}(\lambda,\mu)$ is a linear homeomorphism from 
$\WoneN$ to $\LN$ for each $\lambda\in \bbR$, $\mu\in (0,\infty)$, $\varepsilon\in [0,1)$.  Therefore, we define
\begin{align}
& G_{\varepsilon}^{\pm}(\lambda,\mu) := (T_{\varepsilon}^{\pm}(\lambda,\mu))^{-1}\in \cB\Big(\LN,\WoneN\Big)\subset \cB\Big(\LN\Big),   \no \\
& \hspace*{6.5cm} \lambda\in \bbR,\, \mu\in (0,\infty),\, \varepsilon\in[0,1),
\end{align}
noting that $G_{\varepsilon}^{\pm}(\lambda,\mu)$ are adjoint to one another,
\begin{equation}
(G_{\varepsilon}^{\pm}(\lambda,\mu))^* = G_{\varepsilon}^{\mp}(\lambda,\mu),\quad \lambda\in \bbR,\, \mu\in (0,\infty),\,\varepsilon\in[0,1).
\end{equation}
We also define the scalar-valued function
\begin{align}
&F_{\varepsilon}^{\pm}(\lambda,\mu;\psi) := (\psi,G_{\varepsilon}^{\pm}(\lambda,\mu)\psi)_{\LNs},\\
& \quad \lambda\in \bbR,\, \mu\in (0,\infty),\, \varepsilon\in[0,1),\, \psi\in\LN. 
\no
\end{align}
The following result is a standard application of the second resolvent identity, so we omit the details of its proof.

\begin{proposition}\lb{p3.10}
Assume Hypotheses \ref{h3.1} and \ref{h3.2}.  For each $\psi\in \LN$, $\lambda\in \bbR$, $\mu\in (0,\infty)$, the function
\begin{align}
F_{\cdot}^{\pm}(\lambda,\mu;\psi):
\begin{cases}
[0,1)\to \bbC,\\
\varepsilon\mapsto F_{\varepsilon}^{\pm}(\lambda,\mu;\psi),
\end{cases}
\end{align}
is smooth on $[0,1)$ and
\begin{equation}\lb{3.60zz}
\frac{dF_{\varepsilon}^{\pm}(\lambda,\mu;\psi)}{d\varepsilon} = \pm(\psi,iG_{\varepsilon}^{\pm}(\lambda,\mu)BG_{\varepsilon}^{\pm}(\lambda,\mu)\psi)_{\LNs},\quad \varepsilon\in [0,1).
\end{equation}
\end{proposition}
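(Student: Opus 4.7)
The plan is to deduce smoothness of the scalar function $F_\varepsilon^\pm(\lambda,\mu;\psi)$ from norm-regularity of the operator-valued map $\varepsilon \mapsto G_\varepsilon^\pm(\lambda,\mu) \in \cB(\LNs)$ on $[0,1)$, using the second resolvent identity as the sole workhorse. Throughout I fix $\lambda \in \bbR$, $\mu \in (0,\infty)$, and $\psi \in \LN$.

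First I would observe the algebraic identity
$T_\varepsilon^\pm(\lambda,\mu) - T_{\varepsilon'}^\pm(\lambda,\mu) = \mp i(\varepsilon - \varepsilon') B$ as bounded operators from $\WoneNs$ into $\LNs$ for all $\varepsilon,\varepsilon' \in [0,1)$. Sandwiching this difference between $G_\varepsilon^\pm(\lambda,\mu)$ on the left and $G_{\varepsilon'}^\pm(\lambda,\mu)$ on the right yields the second resolvent identity
\begin{equation*}
G_\varepsilon^\pm(\lambda,\mu) - G_{\varepsilon'}^\pm(\lambda,\mu)
= \pm i (\varepsilon - \varepsilon')\, G_\varepsilon^\pm(\lambda,\mu)\, B\, G_{\varepsilon'}^\pm(\lambda,\mu).
\end{equation*}
Combined with the uniform bound $\|G_\varepsilon^\pm(\lambda,\mu)\|_{\cB(\LNs)} \leq \mu^{-1}$ from Proposition \ref{p3.9}, together with $\|B\|_{\cB(\LNs)} \leq 1$ (since $\eta$ takes values in $[0,1]$), this shows that $\varepsilon \mapsto G_\varepsilon^\pm(\lambda,\mu)$ is Lipschitz continuous on $[0,1)$ in the operator norm, with Lipschitz constant at most $\mu^{-2}$.

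Next, I would divide the identity above by $\varepsilon - \varepsilon'$ and pass to the limit $\varepsilon' \to \varepsilon$, invoking the norm continuity just established to control $G_{\varepsilon'}^\pm(\lambda,\mu) \to G_\varepsilon^\pm(\lambda,\mu)$. This produces the norm-derivative
\begin{equation*}
\frac{d}{d\varepsilon} G_\varepsilon^\pm(\lambda,\mu) = \pm i\, G_\varepsilon^\pm(\lambda,\mu)\, B\, G_\varepsilon^\pm(\lambda,\mu) \in \cB(\LNs),
\end{equation*}
and pairing with $\psi$ in the inner product yields formula \eqref{3.60zz}.

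For the smoothness claim I would proceed by induction on $k \geq 1$, showing the closed-form expression
\begin{equation*}
\frac{d^k}{d\varepsilon^k} G_\varepsilon^\pm(\lambda,\mu)
= (\pm i)^k\, k!\, G_\varepsilon^\pm(\lambda,\mu)\, \bigl(B\, G_\varepsilon^\pm(\lambda,\mu)\bigr)^k,
\end{equation*}
each factor of which is already known to be norm-continuous in $\varepsilon$. The inductive step follows from the Leibniz rule applied to the factorized expression together with the first-order derivative formula. Pairing with $\psi$ then transfers smoothness to $F_\cdot^\pm(\lambda,\mu;\psi)$ on $[0,1)$. I do not anticipate a genuine obstacle here; the argument is the standard one for analytic dependence of resolvents on a bounded perturbation parameter, and in fact the same reasoning would yield real-analyticity (indeed, holomorphy in a complex neighborhood of $[0,1)$) should it ever be needed.
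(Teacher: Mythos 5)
Your argument is correct and is exactly the ``standard application of the second resolvent identity'' that the paper invokes without writing out: from $T_\varepsilon^\pm - T_{\varepsilon'}^\pm = \mp i(\varepsilon-\varepsilon')B$ you obtain $G_\varepsilon^\pm - G_{\varepsilon'}^\pm = \pm i(\varepsilon-\varepsilon')G_\varepsilon^\pm B G_{\varepsilon'}^\pm$, the uniform bound $\|G_\varepsilon^\pm\|\le\mu^{-1}$ from Proposition~\ref{p3.9} together with $\|B\|\le 1$ gives norm Lipschitz continuity and hence norm differentiability, and the induction formula $\frac{d^k}{d\varepsilon^k}G_\varepsilon^\pm=(\pm i)^k k!\,G_\varepsilon^\pm(BG_\varepsilon^\pm)^k$ gives smoothness after pairing with $\psi$. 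Nothing is missing; the signs, domain bookkeeping (using $G_{\varepsilon'}^\pm:\LN\to\WoneN$ and $T_\varepsilon^\pm:\WoneN\to\LN$), and the Leibniz step of the induction all check out.
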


\begin{proposition}\lb{p3.13}
Assume Hypotheses \ref{h3.1} and \ref{h3.2}.  $G_{\varepsilon}^{\pm}(\lambda,\mu)$ leaves $\LoneN$ invariant:
\begin{equation}
G_{\varepsilon}^{\pm}(\lambda,\mu)\LoneN\subset \LoneN.
\end{equation}
In addition, the following estimates hold:
\begin{align}
\big\|B^{1/2}G_{\varepsilon}^{\pm}(\lambda,\mu)\psi\big\|_{\LNs}^2 &\leq \varepsilon^{-1}|F_{\varepsilon}^{\pm}(\lambda,\mu;\psi)|,\quad \psi\in \LN,     \lb{A.86}\\
\big\|B^{1/2}G_{\varepsilon}^{\pm}(\lambda,\mu)\phi\big\|_{\LNs} &\leq C_1\varepsilon^{-1} 
\|\phi\|_{\LonehNs},\quad \phi \in \LonehN,   \lb{A.34}\\
&\hspace*{2.3cm} \lambda\in \bbR,\, \mu\in(0,\infty),\, \varepsilon\in(0,1),\no
\end{align}
where $C_1$ is the same constant as in \eqref{3.26zz}.
\end{proposition}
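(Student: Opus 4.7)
My plan is to establish the two estimates \eqref{A.86} and \eqref{A.34} first, since both follow from elementary manipulations of the identity $T_{\varepsilon}^{\pm}(\lambda,\mu) G_{\varepsilon}^{\pm}(\lambda,\mu) = I_{\LNs}$, and only then to address the invariance of $\LoneN$ under $G_{\varepsilon}^{\pm}(\lambda,\mu)$, which I expect to be the main obstacle.

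To derive \eqref{A.86}, I set $\phi := G_{\varepsilon}^{\pm}(\lambda,\mu)\psi \in \WoneN$ (using Proposition \ref{p3.9}); since $T_{\varepsilon}^{\pm}(\lambda,\mu)\phi = \psi$, I compute
\begin{equation*}
(\phi, \psi)_{\LNs} = (\phi, H_0 \phi)_{\LNs} - (\lambda \pm i \mu) \|\phi\|_{\LNs}^2 \mp i \varepsilon \|B^{1/2} \phi\|_{\LNs}^2.
\end{equation*}
Since $(\phi, H_0 \phi)_{\LNs}$ is real ($H_0$ being self-adjoint on $\WoneN$), taking imaginary parts yields
\begin{equation*}
\Im F_{\varepsilon}^{\pm}(\lambda,\mu;\psi) = - \Im (\phi, \psi)_{\LNs} = \pm \mu \|\phi\|_{\LNs}^2 \pm \varepsilon \|B^{1/2}\phi\|_{\LNs}^2,
\end{equation*}
whence \eqref{A.86}. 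For \eqref{A.34}, I would apply \eqref{A.86} to $\phi \in \LonehN$, factor $(\phi, G_\varepsilon^\pm \phi)_{\LNs} = (B^{-1/2}\phi, B^{1/2} G_\varepsilon^\pm \phi)_{\LNs}$ (legitimate since $\phi \in \dom(B^{-1/2})$ by Proposition \ref{p3.6}), and combine Cauchy--Schwarz with Proposition \ref{p3.6} to obtain
\begin{equation*}
\|B^{1/2} G_\varepsilon^\pm \phi\|_{\LNs}^2 \leq \varepsilon^{-1} \|B^{-1/2}\phi\|_{\LNs} \|B^{1/2} G_\varepsilon^\pm \phi\|_{\LNs} \leq C_1 \varepsilon^{-1} \|\phi\|_{\LonehNs} \|B^{1/2} G_\varepsilon^\pm \phi\|_{\LNs},
\end{equation*}
from which \eqref{A.34} follows upon dividing through (the trivial case of vanishing norm being handled separately).

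For the invariance of $\LoneN$, my strategy is a commutator-plus-regularization argument. The key observation is that $[Q_j, T_\varepsilon^\pm(\lambda,\mu)]$ extends to a bounded operator on $\LN$: indeed, $[Q_j, H_0] = -i\alpha_j$ is a constant matrix, while $[Q_j, B] = i (\partial_j\eta)(-i\nabla)$ is a bounded Fourier multiplier since $\partial_j \eta \in L^\infty(\bbR^n)$ by Hypothesis \ref{h3.2}. Fixing $\psi \in \LoneN$, I would introduce cutoffs $f_{R,j}(x) := x_j \chi(x/R)$ with $\chi \in C_c^\infty(\bbR^n)$, $\chi(0) = 1$, which ensures $\sup_R \|\nabla f_{R,j}\|_{L^\infty(\bbR^n)} < \infty$ and $f_{R,j}(Q)\psi \to Q_j \psi$ pointwise a.e.\ as $R \to \infty$. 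Applying the identity
\begin{equation*}
f_{R,j}(Q) G_\varepsilon^\pm \psi - G_\varepsilon^\pm f_{R,j}(Q) \psi = - G_\varepsilon^\pm \big[f_{R,j}(Q), T_\varepsilon^\pm(\lambda,\mu)\big] G_\varepsilon^\pm \psi
\end{equation*}
(which holds on $\LN$ since $f_{R,j}(Q)$ preserves $\WoneN = \dom(T_\varepsilon^\pm(\lambda,\mu))$), combined with the bound $\|f_{R,j}(Q)\psi\|_{\LNs} \leq \|Q_j \psi\|_{\LNs} \leq \|\psi\|_{\LoneNs}$ and the forthcoming uniform estimate on the bracketed commutator, yields a uniform-in-$R$ bound $\|f_{R,j}(Q) G_\varepsilon^\pm \psi\|_{\LNs} \leq C \|\psi\|_{\LoneNs}$. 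Fatou's lemma (using $f_{R,j}(Q) G_\varepsilon^\pm \psi \to Q_j G_\varepsilon^\pm \psi$ pointwise a.e.) then gives $Q_j G_\varepsilon^\pm \psi \in \LN$ for $j = 1, \dots, n$, establishing the desired invariance.

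The hard part will be verifying the uniform-in-$R$ boundedness of $[f_{R,j}(Q), T_\varepsilon^\pm(\lambda,\mu)] = [f_{R,j}(Q), H_0] \mp i\varepsilon [f_{R,j}(Q), B]$. The first term poses no difficulty, as a direct computation gives $[f_{R,j}(Q), H_0] = i(\alpha \cdot \nabla f_{R,j})(Q)$, bounded by $\|\nabla f_{R,j}\|_{L^\infty(\bbR^n)}$, which is controlled uniformly in $R$ via the chain rule and the choice of $\chi$. The second term $[f_{R,j}(Q), \eta(-i\nabla)]$ is more delicate and I would control it using a standard pseudo-differential commutator estimate of the form $\|[f(Q), g(-i\nabla)]\|_{\cB(\LNs)} \leq C \|\nabla f\|_{L^\infty(\bbR^n)} \|\nabla g\|_{L^\infty(\bbR^n)}$, whose application requires both $\nabla f_{R,j}$ and $\nabla \eta$ to be bounded---the former by construction of the cutoff, the latter by Hypothesis \ref{h3.2}.
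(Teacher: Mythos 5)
Your derivations of \eqref{A.86} and \eqref{A.34} are correct and essentially identical to the paper's: take imaginary parts of $(\phi,\psi)_{\LNs}$ with $\phi = G_\varepsilon^\pm(\lambda,\mu)\psi$, drop the $\mu\|\phi\|^2$ term to get \eqref{A.86}, then factor $B^{-1/2}B^{1/2}$ and apply Cauchy--Schwarz together with Proposition \ref{p3.6} to get \eqref{A.34}.

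For the invariance claim, however, your cutoff-regularization scheme conceals a genuine gap. You must show that $[f_{R,j}(Q),B]$ is bounded \emph{uniformly in $R$}, and you appeal to a ``standard pseudo-differential commutator estimate of the form $\|[f(Q),g(-i\nabla)]\|_{\cB(\LNs)}\leq C\|\nabla f\|_{L^\infty}\|\nabla g\|_{L^\infty}$.'' No such estimate exists in that generality. The elementary Schur-test derivation requires $\cF^{-1}(\nabla g)\in L^1(\bbR^n)$, which is strictly stronger than $\nabla g\in L^\infty(\bbR^n)$; and for the specific $\eta$ of Hypothesis \ref{h3.2}, $\nabla\eta$ is bounded, compactly supported, but has a homogeneity-zero singularity at $p=0$ (of the form $p/|p|$), so $\cF^{-1}(\nabla\eta)$ only decays like $|x|^{-n}$ at infinity and $|x|\,|\cF^{-1}(\nabla\eta)(x)|\notin L^1$. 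What actually rescues $[f(Q),\eta(-i\nabla)]$ is that $\eta$ satisfies Mikhlin--H\"ormander-type conditions, so Calder\'on's commutator theorem (or the Coifman--Meyer machinery) applies; but this is a substantial theorem whose constant depends on higher-order derivative bounds on $\eta$, not merely on $\|\nabla\eta\|_{L^\infty}$, and it should not be waved through as ``standard.'' (You also have a harmless sign slip: $[Q_j,H_0]=+i\alpha_j$, not $-i\alpha_j$.)

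The paper's proof sidesteps this difficulty entirely by choosing a better regularizer. It uses $iQ_j\big(iI_{\LNs}+m^{-1}Q_j\big)^{-1}$, which differs from the resolvent $m\big(iI_{\LNs}+m^{-1}Q_j\big)^{-1}$ only by a constant multiple of the identity. The commutator identity
\begin{equation*}
\Big[iQ_j\big(iI_{\LNs}+m^{-1}Q_j\big)^{-1},\,G_\varepsilon^{\pm}\Big]
= G_\varepsilon^{\pm}\Big[H_0\mp i\varepsilon B,\, m\big(iI_{\LNs}+m^{-1}Q_j\big)^{-1}\Big]G_\varepsilon^{\pm}
\end{equation*}
then reduces, via the second resolvent identity applied to $\big(iI_{\LNs}+m^{-1}Q_j\big)^{-1}$, to the \emph{single} commutator $[Q_j,B]=i(\partial_j\eta)(-i\nabla)$, which is bounded for the trivial reason that $\partial_j\eta\in L^\infty(\bbR^n)$. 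No Calder\'on-type theorem is needed, and the $m\to\infty$ limit together with closedness of $Q_j$ yields $G_\varepsilon^{\pm}(\lambda,\mu)\LoneN\subset\LoneN$. If you want to retain your cutoff argument, you must either cite and verify the hypotheses of a genuine commutator theorem for Lipschitz functions against Mikhlin multipliers, or switch to the resolvent regularizer, which reduces the analytic input to the elementary fact $\partial_j\eta\in L^\infty(\bbR^n)$.
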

\begin{proof}
To prove the invariance claim, it suffices to show that $G_{\varepsilon}^{\pm}(\lambda,\mu)$ preserves the domain of the unbounded self-adjoint operator $Q_j$ for all $1\leq j\leq n$.  Thus, let $j\in \bbN$, with $1\leq j\leq n$, be fixed. Note that the sequence of bounded operators
\begin{equation}
\Big\{ i\big(iI_{\LNs} +m^{-1}Q_j\big)^{-1}\Big\}_{m=1}^{\infty}
\end{equation}
converges strongly to the identity operator in $\LN$.  One computes
\begin{align}
&\Big[iQ_j\big(iI_{\LNs} +m^{-1}Q_j\big)^{-1},G_{\varepsilon}^{\pm}(\lambda,\mu)\Big]\no\\
&\quad = G_{\varepsilon}^{\pm}(\lambda,\mu)\Big[H_0 \mp i\varepsilon B, iQ_j\big(iI_{\LNs} + m^{-1}Q_j\big)^{-1}\Big]G_{\varepsilon}^{\pm}(\lambda,\mu)\no\\
&\quad = G_{\varepsilon}^{\pm}(\lambda,\mu)\Big[H_0 \mp i\varepsilon B,m\big(iI_{\LNs}+m^{-1}Q_j\big)^{-1}\Big]G_{\varepsilon}^{\pm}(\lambda,\mu).\lb{3.66xx}
\end{align}
In addition, the operator $B$ preserves the domain of $Q_j$ and $[Q_j,B]:\dom(Q_j)\to \LN$ extends to a bounded operator $q_j(B)\in \cB\Big(\LN\Big)$.  Therefore,
\begin{align}
&m\Big[H_0\mp i\varepsilon B,\big(iI_{\LNs}+m^{-1}Q_j\big)^{-1}\Big]\no\\
&\quad = i\alpha_j\big(iI_{\LNs}+m^{-1}Q_j\big)^{-1}\no\\
&\qquad \mp i\varepsilon \big(iI_{\LNs}+m^{-1}Q_j\big)^{-1}q_j(B)\big(iI_{\LNs}+m^{-1}Q_j\big)^{-1},\lb{3.66zz}
\end{align}
which converges strongly as $m\to \infty$ to
\begin{equation}
-i\alpha_j \pm i\varepsilon q_j(B).
\end{equation}
The fact that $G_{\varepsilon}^{\pm}(\lambda,\mu)$ preserves the domain of $Q_j$ now follows since the righthand side of \eqref{3.66xx} converges strongly to
\begin{equation}
-G_{\varepsilon}^{\pm}(\lambda,\mu)[i\alpha_j \mp i\varepsilon q_j(B)] G_{\varepsilon}^{\pm}(\lambda,\mu).
\end{equation}

To prove \eqref{A.86}, let $\psi\in \LN$, $\lambda\in \bbR$, and $\mu\in (0,\infty)$ be fixed.  Then
\begin{align}
&\big|F_{\varepsilon}^{\pm}(\lambda,\mu;\psi)\big|  
\geq \pm \Im\big[\big(\psi,G_{\varepsilon}^{\pm}(\lambda,\mu)\psi\big)_{\LNs}\big]\no\\
&\quad = \pm\Im\big[\big((H_0-(\lambda\pm i\mu)I_{\LNs}\mp i\varepsilon B)G_{\varepsilon}^{\pm}(\lambda,\mu)\psi,G_{\varepsilon}^{\pm}(\lambda,\mu)\psi\big)_{\LNs}\big]\no\\
&\quad = \mu\big\|G_{\varepsilon}^{\pm}(\lambda,\mu)\psi\big\|_{\LNs}^2 + \varepsilon \big\|B^{1/2}G_{\varepsilon}^{\pm}(\lambda,\mu)\psi\big\|_{\LNs}^2
\no\\
&\quad \geq \varepsilon \big\|B^{1/2}G_{\varepsilon}^{\pm}(\lambda,\mu)\psi\big\|_{\LNs}^2,\lb{2.59}
\end{align}
which yields \eqref{A.86}.  Next,
\begin{align}
&\big|F_{\varepsilon}^{\pm}(\lambda,\mu,\phi)\big|= 
\big|\big(\phi,G_{\varepsilon}^{\pm}(\lambda,\mu)\phi\big)_{\LNs}\big|\no\\
& \quad \leq \big\|B^{-1/2}\phi\big\|_{\LNs}\big\|B^{1/2}G_{\varepsilon}^{\pm}(\lambda,\mu)\phi\big\|_{\LNs}\no\\
& \quad \leq C_1\|\phi\|_{\LonehNs}\big\|B^{1/2}G_{\varepsilon}^{\pm}(\lambda,\mu)\phi\big\|_{\LNs},\quad \phi\in \LonehN.\lb{2.60}
\end{align}
where the last estimate makes use of Proposition \ref{p3.6}.
Upon combining \eqref{2.59} and \eqref{2.60}, one obtains
\begin{align}
&\varepsilon\big\|B^{1/2}G_{\varepsilon}^{\pm}(\lambda,\mu)\phi\big\|_{\LNs}^2  \leq \big|F_{\varepsilon}^{\pm}(\lambda,\mu;\phi)\big|\no\\
&\quad \leq C_1\|\phi\|_{\LonehNs}\big\|B^{1/2}G_{\varepsilon}^{\pm}(\lambda,\mu)\phi\big\|_{\LNs},
\end{align}
which then implies
\begin{equation}
\big\|B^{1/2}G_{\varepsilon}^{\pm}(\lambda,\mu)\phi\big\|_{\LNs} \leq 
C_1\varepsilon^{-1}\|\phi\|_{\LonehNs},\quad \phi\in \LonehN.
\end{equation}
and \eqref{A.34} follows.
\end{proof}

The main goal of this section is to prove a uniform limiting absorption principle for $H_0$, namely, 
given $\lambda\in \bbR$, $\mu\in(0,\infty)$, there exists a $C \in (0, \infty)$ (independent of 
$\lambda\in \bbR$, $\mu\in(0,\infty)$), such that
\begin{align}
\big\| (H_0 - (\lambda\pm i\mu)I_{\LNs})^{-1} \big\|_{\cB(\LoneNs,\LmoneNs)} 
\leq C,&\\
\quad \lambda\in \bbR,\; \mu\in(0,\infty).\no&
\end{align}
To accomplish this, it actually suffices to prove 
\begin{equation}
\big|F_0^{\pm}(\lambda,\mu;\psi)\big| \leq C'\|\psi\|_{\LoneNs}^2,\quad \psi\in \LoneN, \; \lambda\in \bbR,\; \mu\in (0,\infty),
\end{equation}
for some constant $C' \in (0, \infty)$ (independent of $\lambda\in \bbR$, $\mu\in(0,\infty)$), 
by the next lemma. The latter is surely well-known, but we include a proof for completeness.  

\begin{lemma}\lb{l3.14}
Suppose that $\cH_+$ is a Banach space that embeds continuously and densely into the Hilbert space $\cH$ so that $\cH$ embeds continuously and densely into $\cH_-:=\cH_+^*$.  If $H:\dom(H)\subset \cH \to \cH$ is a self-adjoint operator, then 
\begin{equation}\lb{A.117a}
\big\|(H-(\lambda\pm i\mu)I_{\cH})^{-1}\big\|_{\cB(\cH_+,\cH_-)}\leq C_1, 
\quad \lambda\in \bbR,\; \mu\in(0,\infty),
\end{equation}
for some constant $C_1 \in (0, \infty)$ $($independent of $\lambda\in \bbR$, $\mu\in(0,\infty)$$)$ 
if and only if
\begin{equation}\lb{A.118a}
\big|(\psi,(H-(\lambda \pm i\mu)I_{\cH})^{-1}\psi)_{\cH} \big|\leq C_2\|\psi\|_{\cH_+}^2,\quad \psi\in \cH_+,\, \lambda\in \bbR,\; \mu\in(0,\infty),
\end{equation}
for some constant $C_2 \in (0,\infty)$ $($independent of $\lambda\in \bbR$, $\mu\in(0,\infty)$$)$.
\end{lemma}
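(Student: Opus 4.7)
The plan is to prove both implications of the equivalence separately, exploiting the Gelfand triple structure $\cH_+ \subset \cH \subset \cH_-$ produced by the continuous dense embeddings. The key preliminary observation is that the $\cH$-inner product extends by continuity to a sesquilinear duality pairing $\langle \,\cdot\,,\,\cdot\,\rangle : \cH_+ \times \cH_- \to \bbC$ that agrees with $(\,\cdot\,,\,\cdot\,)_{\cH}$ whenever both arguments lie in $\cH$, and the norm on $\cH_-$ coincides with the operator norm of the associated functional on $\cH_+$.

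For the forward implication \eqref{A.117a}$\Rightarrow$\eqref{A.118a}, fix $\psi \in \cH_+$, $\lambda \in \bbR$, and $\mu \in (0,\infty)$. The element $(H-(\lambda\pm i\mu) I_{\cH})^{-1}\psi$ lies in $\cH \subset \cH_-$, so by the pairing estimate and the hypothesis,
\begin{equation}
\bigl|\bigl(\psi,(H-(\lambda\pm i\mu)I_{\cH})^{-1}\psi\bigr)_{\cH}\bigr| \leq \|\psi\|_{\cH_+}\, \bigl\|(H-(\lambda\pm i\mu)I_{\cH})^{-1}\psi\bigr\|_{\cH_-} \leq C_1 \|\psi\|_{\cH_+}^2,
\end{equation}
which is \eqref{A.118a} with $C_2 = C_1$.

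For the converse \eqref{A.118a}$\Rightarrow$\eqref{A.117a}, set $R := (H-(\lambda \pm i\mu)I_{\cH})^{-1} \in \cB(\cH)$ and consider the sesquilinear form $B(\psi,\phi) := (\psi, R\phi)_{\cH}$ on $\cH_+ \times \cH_+$. The hypothesis \eqref{A.118a} reads $|B(\xi,\xi)| \leq C_2 \|\xi\|_{\cH_+}^2$. I would then invoke the complex polarization identity
\begin{equation}
4 B(\psi,\phi) = B(\psi+\phi,\psi+\phi) - B(\psi-\phi,\psi-\phi) - i B(\psi+i\phi,\psi+i\phi) + i B(\psi-i\phi,\psi-i\phi)
\end{equation}
together with the parallelogram-type expansion $\|\psi \pm \phi\|_{\cH_+}^2 + \|\psi \pm i \phi\|_{\cH_+}^2 \leq 4(\|\psi\|_{\cH_+}^2 + \|\phi\|_{\cH_+}^2)$ to obtain a preliminary estimate $|B(\psi,\phi)| \leq 2 C_2 (\|\psi\|_{\cH_+}^2 + \|\phi\|_{\cH_+}^2)$. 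A standard rescaling $\psi \mapsto t\psi$, $\phi \mapsto t^{-1}\phi$ with $t$ optimized then yields the bilinear bound $|B(\psi,\phi)| \leq 4 C_2 \|\psi\|_{\cH_+}\|\phi\|_{\cH_+}$ for all $\psi,\phi \in \cH_+$. For each fixed $\phi \in \cH_+$, the map $\psi \mapsto (\psi, R\phi)_{\cH}$ is thus a continuous anti-linear functional on $\cH_+$, and via the Gelfand triple identification this functional is represented by $R\phi$ viewed as an element of $\cH_-$ of norm at most $4 C_2 \|\phi\|_{\cH_+}$. This delivers \eqref{A.117a} with $C_1 = 4 C_2$.

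The main obstacle is bookkeeping rather than analysis: one must carefully articulate the identification of $\cH_-$ with the anti-dual of $\cH_+$ via the extended pairing so that an operator $R$ defined on $\cH$ can be unambiguously regarded as a bounded map $\cH_+ \to \cH_-$ whose norm is computed from the sesquilinear form $B$. Once this identification is laid out, the uniformity in $\lambda$ and $\mu$ is automatic because the bounds \eqref{A.117a} and \eqref{A.118a} are themselves assumed to be uniform, and the polarization/rescaling argument only involves the values of $B$ at fixed $(\lambda,\mu)$.
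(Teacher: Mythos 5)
Your proof is correct and follows essentially the same route as the paper: the forward direction is immediate from the $\cH_+$--$\cH_-$ duality pairing, and the converse is a polarization argument, with the same resulting constant $C_1 = 4C_2$. Where the paper compresses the converse to "the polarization principle" applied to the bilinear identity $\|T\|_{\cB(\cH_+,\cH_-)} = \sup_{\|\psi\|_{\cH_+}=\|\phi\|_{\cH_+}=1}|(\phi,T\psi)_{\cH}|$, you spell it out as complex polarization followed by a triangle-inequality bound and a rescaling optimization, which is a valid (and slightly more explicit) way to extract the factor $4$.
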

\begin{proof}
The equivalence of \eqref{A.117a} and \eqref{A.118a} follows from the estimates
\begin{equation}\lb{3.76zz}
\sup_{\psi\in \cH_+,\|\psi\|_{\cH_+}=1}|(\psi,T\psi)_{\cH}| \leq \|T\|_{\cB(\cH_+,\cH_-)}\leq 4 \sup_{\psi\in \cH_+,\|\psi\|_{\cH_+}=1}|(\psi,T\psi)_{\cH}|,
\end{equation}
which hold for any $T\in \cB(\cH_+,\cH)$.  In fact, for $T\in \cB(\cH_+,\cH)$, one obtains
\begin{align}
\|T\|_{\cB(\cH_+,\cH_-)} &= \sup_{\psi\in\cH_+,\|\psi\|_{\cH_+}=1}\|T\psi\|_{\cH_-}\no\\
&= \sup_{\psi\in \cH_+,\|\psi\|_{\cH_+}=1}\bigg(\sup_{\phi\in \cH_+,\|\phi\|_{\cH_+}=1}|(T\psi)(\phi)| \bigg)\no\\
&= \sup_{\psi,\phi\in \cH_+,\|\psi\|_{\cH_+}=\|\phi\|_{\cH_+}=1}|(\phi,T\psi)_{\cH}|,
\end{align}
and \eqref{3.76zz} follows by the polarization principle.
\end{proof}

The following lemma will be used in the proof of the global limiting absorption principle for $H_0$.

\begin{lemma}[Lemma 7.A.1 in \cite{ABG96}]\lb{lA.1}
Let $(a,b)\subset \bbR$ be an open interval and let $f$, $\phi$, and $\psi$ be nonnegative real functions on $(a,b)$ with $f$ bounded and measurable and $\phi,\psi\in L^1((a,b);dx)$.  Assume there exist constants $\omega\in [0,\infty)$ and $\theta\in [0,1)$ with
\begin{equation}
f(\lambda)\leq \omega +\int_{\lambda}^b dt \, [\phi(t)f(t)^{\theta} + \psi(t)f(t)], 
\quad \lambda\in (a,b).
\end{equation}
Then
\begin{equation}
f(\lambda)\leq e^{\int_{\lambda}^b dt \, \psi(t)}\cdot\Bigg[\omega^{1-\theta} 
+ (1-\theta)\int_{\lambda}^b ds \, \phi(s)e^{(\theta-1)\int_s^b dt \, \psi(t)}\Bigg]^{1/(1-\theta)},\quad \lambda\in(a,b).
\end{equation}
\end{lemma}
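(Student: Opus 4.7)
\smallskip

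\noindent\textbf{Proof proposal.}
The plan is to linearize the Bernoulli-type integral inequality by introducing the majorant
\begin{equation*}
g(\lambda) := \omega + \int_\lambda^b dt \, \big[\phi(t) f(t)^\theta + \psi(t) f(t)\big], \quad \lambda\in(a,b),
\end{equation*}
so that $0 \leq f(\lambda)\leq g(\lambda)$ by hypothesis, and $g$ is nonincreasing and absolutely continuous on every compact subinterval of $(a,b)$ (because $\phi,\psi\in L^1$ and $f$ is bounded), hence differentiable a.e.\ with $g'(\lambda)=-\phi(\lambda)f(\lambda)^\theta-\psi(\lambda)f(\lambda)$. Since $\theta\in[0,1)$ and $0\le f\le g$, one has $f^\theta\le g^\theta$, which converts the integral inequality into the pointwise differential inequality
\begin{equation*}
-g'(\lambda)\leq \phi(\lambda) g(\lambda)^\theta + \psi(\lambda) g(\lambda) \quad \text{for a.e. } \lambda\in(a,b).
\end{equation*}

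To remove the sublinear term, I would first reduce to the case $\omega>0$, so that $g\ge\omega>0$ everywhere and dividing by $g^\theta$ is unproblematic. Setting $G:=g^{1-\theta}$ (absolutely continuous with $G'=(1-\theta)g^{-\theta}g'$), the inequality above becomes the linear inequality
\begin{equation*}
G'(\lambda)+(1-\theta)\psi(\lambda)G(\lambda)\geq -(1-\theta)\phi(\lambda) \quad \text{for a.e. } \lambda\in(a,b).
\end{equation*}
Next I would introduce the integrating factor $E(\lambda):=\exp\!\big\{(1-\theta)\int_b^\lambda \psi(t)\,dt\big\}$, for which $(GE)'=(G'+(1-\theta)\psi G)E\geq -(1-\theta)\phi\, E$ almost everywhere. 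Integrating this inequality from $\lambda$ to $b$ and using $G(b)=\omega^{1-\theta}$, $E(b)=1$ gives
\begin{equation*}
G(\lambda)\leq E(\lambda)^{-1}\bigg[\omega^{1-\theta}+(1-\theta)\int_\lambda^b \phi(s)E(s)\,ds\bigg].
\end{equation*}

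Rewriting $E(\lambda)^{-1}=e^{(1-\theta)\int_\lambda^b\psi(t)dt}$ and $E(s)=e^{(\theta-1)\int_s^b\psi(t)dt}$, raising both sides to the power $1/(1-\theta)$, and using $f\le g=G^{1/(1-\theta)}$ together with the elementary estimate $(x+y)^{1/(1-\theta)}\le$ (when dealing with the raising step the inequality on $G$ transfers cleanly because $1/(1-\theta)>0$), yields precisely the claimed bound. Finally, the case $\omega=0$ is recovered by applying the inequality to $\omega_n\downarrow 0$ and passing to the limit using monotone/dominated convergence on the right-hand side.

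The main technical point I expect to watch carefully is the passage from the integral inequality for $f$ to the a.e.\ differential inequality for $G$: one needs $g$ to be absolutely continuous (ensured by $\phi,\psi\in L^1$ and $f$ bounded) to legitimately write $g'=-\phi f^\theta-\psi f$ a.e., and one needs $g$ strictly positive to form $G=g^{1-\theta}$ and apply the chain rule — which is why reducing to $\omega>0$ first, and then taking $\omega\to 0^+$, is cleaner than attempting to handle the possibly-vanishing case directly.
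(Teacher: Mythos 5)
The paper itself supplies no proof of this lemma; it simply cites [ABG96, Lemma~7.A.1], so there is no in-paper argument to compare against. Your proposal, however, is a correct standalone verification, and it follows the standard Bihari/Bernoulli route: majorize $f$ by the absolutely continuous, nonincreasing function $g(\lambda)=\omega+\int_\lambda^b[\phi f^\theta+\psi f]\,dt$, observe $g'=-\phi f^\theta-\psi f\geq -\phi g^\theta-\psi g$ a.e., reduce to $\omega>0$ to make $g$ bounded away from zero, linearize via $G=g^{1-\theta}$ to obtain $G'+(1-\theta)\psi G\geq -(1-\theta)\phi$, and close with the integrating factor $E(\lambda)=\exp\big((1-\theta)\int_b^\lambda\psi\big)$ and the terminal values $G(b)=\omega^{1-\theta}$, $E(b)=1$. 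All the genuinely delicate points are correctly identified and handled: the integrability of $\phi f^\theta+\psi f$ (from $f$ bounded and $\phi,\psi\in L^1$) gives absolute continuity of $g$ on $[\lambda,b]$; Lipschitz-composed-with-AC justifies the chain rule for $G$ once $g\geq\omega>0$; and the $\omega=0$ case follows by letting $\omega_n\downarrow 0$ and using continuity of $\omega\mapsto\omega^{1-\theta}$ on the right-hand side (no convergence theorem is actually needed). One cosmetic remark: the sentence invoking an ``elementary estimate $(x+y)^{1/(1-\theta)}\le\dots$'' is a red herring and reads as if truncated; the final step requires only that $t\mapsto t^{1/(1-\theta)}$ is increasing, so $G\leq C$ gives $g=G^{1/(1-\theta)}\leq C^{1/(1-\theta)}$ directly, and then $f\leq g$ finishes. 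You should strike that fragment and state the monotonicity argument plainly.
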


Given these preparations, everything is finally in place to state and prove the principal result of this section, a global limiting absorption principle for $H_0$ in all dimensions $n \in \bbN$, 
$n \geq 2$: 

\begin{theorem} \lb{t3.15} 
Assume Hypotheses \ref{h3.1} and \ref{h3.2} and let $\lambda\in \bbR$, $\mu\in(0,\infty)$. Then 
\begin{equation}\lb{2.13}
\big\| (H_0 - (\lambda\pm i\mu)I_{\LNs})^{-1} \big\|_{\cB(\LoneNs,\LmoneNs)} 
\leq C 
\end{equation}
for some constant $C \in (0, \infty)$, independent of $\lambda\in \bbR$, $\mu\in(0,\infty)$. 
Equivalently,
\begin{equation}
\big\|\langle Q \rangle^{-1} (H_0 - (\lambda\pm i\mu)I_{\LNs})^{-1} 
\langle Q \rangle^{-1}\big\|_{\cB(\LNs)} \leq C, \quad \lambda\in \bbR, \; \mu\in(0,\infty). \lb{2.76}
\end{equation}
Consequently, $\langle Q \rangle^{-1}$ is $H_0$-Kato-smooth, that is, given $\varepsilon_0 > 0$, 
for each $f \in \LN$, 
\begin{align}
& \sup_{\varepsilon \in(0, \varepsilon_ 0), \, \|f\|_{\LNs}=1} \f{1}{4 \pi^2} \int_{\bbR} d \lambda \, 
\Big[\big\|\langle Q \rangle^{-1}(H_0 - (\lambda + i \varepsilon) I_{\LNs})^{-1}f\big\|^2_{\LNs}    \no \\
& \hspace*{2.5cm} + \big\|\langle Q \rangle^{-1}(H_0 - (\lambda - i \varepsilon) I_{\LNs})^{-1}f\big\|^2_{\LNs}\Big] < \infty.     \lb{3.1} 
\end{align}
\end{theorem}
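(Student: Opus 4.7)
The plan is to establish \eqref{2.13} by combining Lemma~\ref{l3.14}, the differentiation formula of Proposition~\ref{p3.10}, the commutator identity of Proposition~\ref{p3.4}, and the Gr\"onwall-type Lemma~\ref{lA.1}. By Lemma~\ref{l3.14}, it suffices to prove the uniform sesquilinear bound $|F_0^\pm(\lambda, \mu; \psi)| \leq C' \|\psi\|_{\LoneNs}^2$ for all $\psi \in \LoneN$, $\lambda \in \bbR$, $\mu \in (0,\infty)$, and this is what I would actually establish. Equation~\eqref{2.76} is then a reformulation, and~\eqref{3.1} follows by the standard Kato criterion for smoothness.

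I would first derive a differential inequality in $\varepsilon \in (0,1)$ for $f_\pm(\varepsilon) := |F_\varepsilon^\pm(\lambda, \mu; \psi)|$. From Proposition~\ref{p3.10}, $\tfrac{d}{d\varepsilon} F_\varepsilon^\pm = \pm i (\psi, G_\varepsilon^\pm B G_\varepsilon^\pm \psi)_{\LNs}$. Substituting $B = i[A, H_0]$ from Proposition~\ref{p3.4}, expanding $H_0 = T_\varepsilon^\pm + (\lambda \pm i\mu) I \pm i\varepsilon B$, and invoking Proposition~\ref{p3.8} (which gives $[A, B] = iKB$), one formally arrives at
\begin{equation}
i G_\varepsilon^\pm B G_\varepsilon^\pm = -[G_\varepsilon^\pm, \overline{A}] \pm \varepsilon G_\varepsilon^\pm K B G_\varepsilon^\pm,
\end{equation}
to be interpreted in a quadratic form sense on $\LoneN$ via~\eqref{A.31zz}, using the mapping properties $G_\varepsilon^\pm(\LoneN) \subseteq \LoneN \cap \WoneN$ from Propositions~\ref{p3.9} and~\ref{p3.13} together with $\LoneN \subseteq \dom(\overline{A})$ from Proposition~\ref{p3.3}. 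Cauchy--Schwarz, the factorization $\overline{A} = B^{1/2} (B^{-1/2} \overline{A})$, and Propositions~\ref{p3.6}--\ref{p3.7} give
\begin{equation}
|(\psi, G_\varepsilon^\pm \overline{A} \psi)_{\LNs}| \leq C_2 \|\psi\|_{\LoneNs} \|B^{1/2} G_\varepsilon^\mp \psi\|_{\LNs} \leq C_2 \varepsilon^{-1/2} \|\psi\|_{\LoneNs} f_\pm(\varepsilon)^{1/2},
\end{equation}
where the last step uses~\eqref{A.86} and the identity $f_+(\varepsilon) = f_-(\varepsilon)$ arising from $(G_\varepsilon^\pm)^* = G_\varepsilon^\mp$. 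The error term is similarly controlled by $\|K\|_{\cB(\LNs)} f_\pm(\varepsilon)$, producing
\begin{equation}
|f_\pm'(\varepsilon)| \leq 2 C_2 \varepsilon^{-1/2} \|\psi\|_{\LoneNs} f_\pm(\varepsilon)^{1/2} + \|K\|_{\cB(\LNs)} f_\pm(\varepsilon), \quad \varepsilon \in (0,1).
\end{equation}

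An \emph{a priori} bound at $\varepsilon = 1/2$ follows from~\eqref{A.34} applied with $\phi = \psi \in \LoneN \subseteq \LonehN$, combined with Proposition~\ref{p3.6}:
\begin{equation}
f_\pm(1/2) \leq \|B^{-1/2} \psi\|_{\LNs} \|B^{1/2} G_{1/2}^\pm \psi\|_{\LNs} \leq 2 C_1^2 \|\psi\|_{\LoneNs}^2
\end{equation}
uniformly in $\lambda \in \bbR$ and $\mu \in (0,\infty)$. Applying Lemma~\ref{lA.1} on $(0, 1/2)$ with $\theta = 1/2$, $\omega = 2 C_1^2 \|\psi\|_{\LoneNs}^2$, $\phi(t) = 2 C_2 t^{-1/2} \|\psi\|_{\LoneNs}$, and $\psi(t) = \|K\|_{\cB(\LNs)}$---the integrability of $t^{-1/2}$ near $0$ being essential---then yields a bound on $f_\pm(\varepsilon)$ uniform in $\varepsilon \in (0, 1/2]$, $\lambda$, and $\mu$. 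Smoothness of $F_\cdot^\pm$ at $\varepsilon = 0$ (Proposition~\ref{p3.10}) delivers the target estimate on $f_\pm(0)$, and hence~\eqref{2.13} via Lemma~\ref{l3.14}. Finally, \eqref{3.1} follows from~\eqref{2.76} by the standard characterization of Kato smoothness in terms of a uniform bound on $\|\langle Q \rangle^{-1} R(z) \langle Q \rangle^{-1}\|_{\cB(\LNs)}$ (see, e.g., \cite{Ya92}).

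The main obstacle is the rigorous justification of the commutator identity in the second step. Since $\overline{A}$ is unbounded, every manipulation must be interpreted in a quadratic form sense via~\eqref{A.31zz}; moreover, the absence of $\overline{A}$ from the final differential inequality depends decisively on the boundedness $B^{-1/2} \overline{A} \in \cB(\LoneNs, \LNs)$ from Proposition~\ref{p3.7}, which plays the role of the effective Mourre estimate in the present massless setting and is the key technical input of the argument.
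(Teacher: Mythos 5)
Your proposal is correct and follows essentially the same route as the paper's proof: reduce to the sesquilinear bound via Lemma~\ref{l3.14}, differentiate $F_\varepsilon^\pm$ in $\varepsilon$ using Propositions~\ref{p3.10}, \ref{p3.4}, and~\ref{p3.8}, obtain a Gr\"onwall-type differential inequality with a $\varepsilon^{-1/2}f^{1/2}$ singularity, anchor it with an a priori bound at a positive $\varepsilon$, and pass to the limit $\varepsilon \to 0^+$. The only cosmetic difference is that the paper further estimates the $\varepsilon\,\|K\|\,\|B^{1/2}G_\varepsilon^\mp\psi\|\,\|B^{1/2}G_\varepsilon^\pm\psi\|$ contribution (using \eqref{A.86} together with \eqref{A.34}) so that it is also absorbed into a single $C_4\varepsilon^{-1/2}|F_\varepsilon^\pm|^{1/2}\|\psi\|_{\LoneNs}$ term, which lets it invoke Lemma~\ref{lA.1} with $\psi(t)\equiv 0$, whereas you keep the $\|K\|f$ term and use the lemma with a nontrivial $\psi(t)=\|K\|$; both choices give the same uniform bound.
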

\begin{proof}
By Lemma \ref{l3.14}, it suffices to show 
\begin{equation}\lb{2.14}
\big|F_0^{\pm}(\lambda,\mu;\psi)\big| \leq C\|\psi\|_{\LoneNs}^2,\quad \psi\in \LoneN,\; 
\lambda\in \bbR,\; \mu\in (0,\infty),
\end{equation}
for some constant $C>0$.

Choose $\psi_1=G_{\varepsilon}^{\pm}(\lambda,\mu)^*\psi$ and $\psi_2=G_{\varepsilon}^{\pm}(\lambda,\mu)\psi$. On one hand, since $\psi\in  \LoneNs$, Proposition \ref{p3.13} guarantees that $\psi_1, \psi_2\in \LoneNs$. On the other hand, $\psi_1, \psi_2\in \WoneN$ since
\begin{equation}
G_{\varepsilon}^{\pm}(\lambda,\mu)\in \cB\Big(\LN,\WoneN\Big).
\end{equation}
Therefore, combining the results of  Propositions \ref{p3.4} and \ref{p3.8}, one obtains 
\begin{align}
\frac{d}{d\varepsilon} F_{\varepsilon}^{\pm}(\lambda,\mu;\psi)&\stackrel{\eqref{3.60zz}}{=} \pm i(G_{\varepsilon}^{\mp}(\lambda,\mu)\psi,BG_{\varepsilon}^{\pm}(\lambda,\mu)\psi)_{\LNs}\no\\
&\stackrel{\eqref{A.31zz}}{=}\hspace{.2mm}\pm (H_0G_{\varepsilon}^{\mp}(\lambda,\mu)\psi,\overline{A}G_{\varepsilon}^{\pm}(\lambda,\mu)\psi)_{\LNs}\no\\
&\hspace{8.5mm}  \mp (\overline{A}G_{\varepsilon}^{\mp}(\lambda,\mu)\psi,H_0G_{\varepsilon}^{\pm}(\lambda,\mu)\psi)_{\LNs}\no\\
&\hspace{2.7mm} =\hspace{2mm}\pm(\psi,\overline{A}G_{\varepsilon}^{\pm}(\lambda,\mu)\psi)_{\LNs} \mp (\overline{A}G_{\varepsilon}^{\mp}(\lambda,\mu)\psi,\psi)_{\LNs}\no\\
&\hspace{8.5mm} -(i\varepsilon BG_{\varepsilon}^{\mp}(\lambda,\mu)\psi,\overline{A}G_{\varepsilon}^{\pm}(\lambda,\mu)\psi)_{\LNs}\no\\
&\hspace{8.5mm} - (\overline{A}G_{\varepsilon}^{\mp}(\lambda,\mu)\psi,i\varepsilon BG_{\varepsilon}^{\pm}(\lambda,\mu)\psi)_{\LNs}\no\\
& \stackrel{\eqref{A.20}}{=} \pm(\overline{A}\psi,G_{\varepsilon}^{\pm}(\lambda,\mu)\psi)_{\LNs} \mp (G_{\varepsilon}^{\mp}(\lambda,\mu)\psi,\overline{A}\psi)_{\LNs}\no\\
&\hspace{8mm}+ \varepsilon\big(KB^{1/2}G_{\varepsilon}^{\mp}(\lambda,\mu)\psi,B^{1/2}G_{\varepsilon}^{\pm}(\lambda,\mu)\psi\big)_{\LNs},\\
&\hspace*{1.2cm} \psi\in \LoneN,\; \lambda\in \bbR,\; \mu\in (0,\infty),\; \varepsilon \in (0,1).\no
\end{align}

Hence,
\begin{align}
&\bigg|\frac{d}{d\varepsilon} F_{\varepsilon}(\lambda,\mu;\psi)\bigg|   
\leq \big\|B^{-1/2}\overline{A}\psi\big\|_{\LNs} \Big[\big\|B^{1/2}G_{\varepsilon}^{\pm}(\lambda,\mu)^*\psi\big\|_{\LNs}\no\\
&\qquad + \big\|B^{1/2}G_{\varepsilon}^{\pm}(\lambda,\mu)\psi\big\|_{\LNs}\Big]\no\\
&\qquad + \|K\|_{\cB(\LNs)} \, \varepsilon \, \big\|B^{1/2}G_{\varepsilon}^{\pm}(\lambda,\mu)^*\psi\big\|_{\LNs}\big\|B^{1/2}G_{\varepsilon}^{\pm}(\lambda,\mu)\psi\big\|_{\LNs}\no\\
&\quad \leq C_4\varepsilon^{-1/2} \big|F_{\varepsilon}^{\pm}(\lambda,\mu;\psi)\big|^{1/2}\|\psi\|_{\LoneNs}, \lb{3.84y} \\
& \qquad \;\, \psi\in \LoneN,\, \lambda\in \bbR,\, \mu\in (0,\infty),\, \varepsilon \in (0,1), \no
\end{align}
where we applied Propositions \ref{p3.7} and \ref{p3.13}, and 
$C_4 \in (0,\infty)$ is a constant independent of $\lambda\in \bbR$, 
$\mu\in (0,\infty)$, and $\varepsilon\in (0,1)$.

Next, fix $r_0\in (0,1)$, and integrate over $[r,r_0]\subset(0,1)$ in \eqref{3.84y} to obtain for arbitrary $\psi\in \LoneN$,
\begin{align}
&C_4\|\psi\|_{\LoneNs}\int_r^{r_0} \, d\varepsilon \, \varepsilon^{-1/2}|F_{\varepsilon}^{\pm}(\lambda,\mu;\psi)|^{1/2} \\
&\quad \geq \int_r^{r_0} \, d\varepsilon \,\bigg|\frac{d}{d\varepsilon} F_{\varepsilon}^{\pm}(\lambda,\mu;\psi) \bigg| \no\\
&\quad\geq \Bigg| \int_r^{r_0} \, d\varepsilon \, \frac{d}{d\varepsilon} F_{\varepsilon}^{\pm}(\lambda,\mu;\psi) \Bigg|\no\\
&\quad= \big|F_{r_0}^{\pm}(\lambda,\mu;\psi) - F_{r}^{\pm}(\lambda,\mu;\psi) \big|\no\\
&\quad \geq \big|\big| F_{r}^{\pm}(\lambda,\mu;\psi) \big| - \big| F_{r_0}^{\pm}(\lambda,\mu;\psi) \big|\big|,\quad \lambda\in \bbR,\, \mu\in (0,\infty),\, r\in (0,r_0),\no
\end{align}
which yields
\begin{align}
&\big|F_{r}^{\pm}(\lambda,\mu;\psi)\big| 
\leq \big|F_{r_0}^{\pm}(\lambda,\mu;\psi)\big|
+ C_4\|\psi\|_{\LoneNs} \int_r^{r_0} \, d\varepsilon \, \varepsilon^{-1/2}\big|F_{\varepsilon}^{\pm}(\lambda,\mu;\psi)\big|^{1/2},    \no \\
&\hspace*{3.7cm}\psi\in \LoneN,\; \lambda\in \bbR,\; \mu\in (0,\infty), 
\;  r\in (0,r_0).    
\end{align}

By Lemma \ref{lA.1} with $\theta=1/2$,
\begin{align}
\big| F_r^{\pm}(\lambda,\mu;\psi) \big| \leq \big[ \big|F_{r_0}^{\pm}(\lambda,\mu;\psi)\big|^{1/2} + C_4\|\psi\|_{\LoneNs}(r_0^{1/2}-r^{1/2}) \big]^2,&\\
\psi\in \LoneN,\; \lambda\in \bbR,\; \mu\in (0,\infty),\; r\in (0,r_0).&\no
\end{align}
In addition, by Propositions \ref{p3.6} and \ref{p3.13}, 
\begin{align}
\big| F_{r_0}^{\pm}(\lambda,\mu;\psi)\big|^{1/2} 
&= |(\psi,G_{r_0}^{\pm}(\lambda,\mu)\psi)_{\LNs}|^{1/2}   \no \\
&\leq \big\|B^{-1/2}\psi\big\|_{\LNs}^{1/2} \big\|B^{1/2}
G_{r_0}^{\pm}(\lambda,\mu)\psi\big\|_{\LNs}^{1/2}   \\
&\leq C_1 r_0^{-1/2} \|\psi\|_{\LonehNs}, \quad \psi\in \LonehN,\; \lambda\in \bbR,\; \mu\in (0,\infty).\no
\end{align}
Therefore,
\begin{align}
\begin{split} 
\big| F_r^{\pm}(\lambda,\mu;\psi) \big| \leq \big[C_1 r_0^{-1/2}\|\psi\|_{\LonehNs} 
+ C_4\|\psi\|_{\LoneNs}(r_0^{1/2}-r^{1/2}) \big]^2,&\\
\psi\in \LoneN,\;\lambda\in \bbR,\; \mu\in (0,\infty),\; r\in (0,r_0).&\
\end{split} 
\end{align}
Finally, taking the limit $r\to 0^+$ and applying Proposition \ref{p3.10}, one obtains
\begin{equation}
\big| F_0^{\pm}(\lambda,\mu;\psi) \big| \leq \big[C_1 r_0^{-1/2} 
+ C_4r_0^{1/2} \big]^2\|\psi\|_{\LoneNs}^2,
\end{equation}
which yields the desired inequality with $C = \big[C_1 r_0^{-1/2} + C_4r_0^{1/2} \big]^2$.

The claim \eqref{2.76} is clear from the fact that $\langle Q \rangle^{-s}$ is an isometric 
isomorphism from $L^2_t (\bbR^n)$ onto $L_{s+t}^2 (\bbR^n)$, $s, t \in \bbR$. That 
$\langle Q \rangle^{-1}$ is $H_0$-Kato-smooth follows from \eqref{2.76} 
(cf., e.g., \cite[p.~148]{RS78}, \cite[p.~134]{Ya92}). 
\end{proof}

Of course, Theorem \ref{t3.15} implies the absence of any singular spectrum (i.e., the absence of eigenvalues and singular continuous spectrum) of $H_0$, 
\begin{equation}
\sigma_{s}(H_0) = \emptyset,
\end{equation}
but since $H_0$ is unitarily equivalent, via Fourier transform, to the operator 
of multiplication by $ \alpha \cdot p$, $p \in \bbR^n$, purely absolutely continuous spectrum of $H_0$ was obvious from the outset.  

\begin{remark} \lb{r3.16}
In the massive case, where $H_0(m) = H_0 + m \, \beta$, $m > 0$, a global limiting absorption principle for $H_0(m)$ was proved in dimension $n=3$ by Iftimovici and M{\u a}ntoiu 
\cite{IM99} in 1999. The corresponding massless case (i.e., for the operator $H_0$) in dimension $n=3$ was settled in 2008 by Sait{\= o} and Umeda \cite{SU08} employing entirely different methods in their study of zero eigenvalues and zero-energy resonances of massless Dirac operators. Our result, Theorem \ref{t3.15}, appears to be new for $n \in \bbN \backslash \{3\}$, $n \geq 2$.
${}$ \hfill $\diamond$
\end{remark}

Theorem \ref{t3.15} implies existence and completeness (in fact, unitarity) of wave operators for the pair of self-adjoint operators ($H=H_0 + V, H_0)$ for sufficiently ``weak'' perturbations $V$ of $H_0$ 
in the following standard manner (we refer to \cite[Ch.~4]{Ya92}, especially, 
\cite[Theorem~4.6.1]{Ya92}, for details): Consider the self-adjoint matrix-valued potential $V = \{V_{\ell,m}\}_{1 \leq \ell,m \leq N}$ satisfying for some $C \in (0,\infty)$, 
\begin{equation}
V \in \big[L^{\infty} (\bbR^n)\big]^{N \times N}, \quad 
\|V(x)\|_{\bbC^{N\times N}} \leq C \langle x \rangle^{- 2}\, \text{ for a.e. $x \in \bbR^n$},   \lb{3.92}
\end{equation}
and define the interacting massless Dirac operator $H$ via 
\begin{equation}
H = H_0 + V, \quad \dom(H) = \dom(H_0) = \WoneN.  \lb{3.93}
\end{equation}
One infers that
\begin{equation}
\|\langle x \rangle V \langle x\rangle\|_{\bbC^{N\times N}} \leq C\, \text{ for a.e. $x\in \bbR^n$},
\end{equation}
and 
\begin{align} \lb{3.105zz}
\begin{split} 
& \langle Q \rangle^{- 1} \big(H -z I_{\LNs} \big)^{-1} \langle Q \rangle^{- 1}  
= \langle Q \rangle^{- 1} \big(H_0 -z I_{\LNs} \big)^{-1} \langle Q \rangle^{- 1}   \\
& \quad \times \big[I + \langle Q \rangle V \, 
\big(H_0 -z I_{\LNs} \big)^{-1} \langle Q \rangle^{- 1} \big]^{-1},  
\quad z \in \bbC \backslash \bbR.  
\end{split} 
\end{align}

\begin{theorem} \lb{t3.17} 
Assume Hypotheses \ref{h3.1} and \ref{h3.2}, \eqref{3.92}, and define the operator $H$ as in \eqref{3.93}.  
If in addition, 
\begin{equation}
\sup_{\lambda \in \bbR,\, \mu \in (0,\infty)} \big\|\langle Q \rangle V 
(H_0 - (\lambda \pm i \mu) I_{\LNs})^{-1} \langle Q \rangle^{-1} \big\|_{\cB(\LNs)} < 1,
\end{equation}
then the wave operators
\begin{equation}
W_{\pm} (H,H_0) = \slim_{t \to \pm \infty} e^{i t H} e^{- i t H_0},  \quad 
W_{\pm} (H_0,H) = \slim_{t \to \pm \infty} e^{i t H_0} e^{- i t H}, 
\end{equation}
exist and are complete, that is, 
\begin{align} 
\begin{split} 
& \ker(W_{\pm} (H,H_0)) = \ker(W_{\pm} (H_0,H)) = \{0\}, \\
& \ran(W_{\pm} (H,H_0)) = \ran(W_{\pm} (H_0,H)) = \LN. 
\end{split} 
\end{align}
In fact, they are unitary and adjoint to each other, $W_{\pm} (H,H_0)^* = W_{\pm} (H_0,H)$. In 
particular, $H$ and $H_0$ are unitarily equivalent and hence $H$ is spectrally purely absolutely 
continuous. 
\end{theorem}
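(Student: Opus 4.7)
The strategy I would follow is the standard Kato--Birman smooth perturbation approach explicitly pointed to by the authors in the paragraph preceding Theorem \ref{t3.17}. The first step is to upgrade the limiting absorption principle of Theorem \ref{t3.15} from $H_0$ to the interacting operator $H$. The hypothesis $\sup_{\lambda,\mu}\|\langle Q\rangle V(H_0-(\lambda\pm i\mu)I_{\LNs})^{-1}\langle Q\rangle^{-1}\|_{\cB(\LNs)}<1$ guarantees that the operator $I_{\LNs} + \langle Q\rangle V(H_0-zI_{\LNs})^{-1}\langle Q\rangle^{-1}$ is boundedly invertible with inverse uniformly bounded for $z \in \bbC \setminus \bbR$. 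Substituting into the resolvent identity \eqref{3.105zz} and invoking Theorem \ref{t3.15} at once yields
\begin{equation*}
\sup_{\lambda\in\bbR,\,\mu\in(0,\infty)}\big\|\langle Q\rangle^{-1}(H-(\lambda\pm i\mu)I_{\LNs})^{-1}\langle Q\rangle^{-1}\big\|_{\cB(\LNs)}<\infty,
\end{equation*}
i.e., a global LAP for $H$, equivalently, $\langle Q\rangle^{-1}$ is $H$-Kato-smooth (cf.\ \cite[p.~134]{Ya92}).

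The second step factors the perturbation. Using the polar decomposition $V = U|V|$ and setting $B_1 = |V|^{1/2}$, $B_2 = U|V|^{1/2}$, one has $V = B_1^*B_2$ where each $B_j$ is pointwise dominated by $C^{1/2}\langle Q\rangle^{-1}$ as a bounded operator of multiplication. Combining this domination with the $H_0$-Kato-smoothness of $\langle Q\rangle^{-1}$ from Theorem \ref{t3.15} and the $H$-Kato-smoothness just established shows that $B_1$ and $B_2$ are both $H_0$-smooth and $H$-smooth. The Kato--Birman theorem on smooth perturbations, \cite[Thm.~4.6.1]{Ya92}, then yields existence of all four wave operators $W_\pm(H,H_0)$ and $W_\pm(H_0,H)$, the adjoint relation $W_\pm(H,H_0)^* = W_\pm(H_0,H)$, and completeness in the sense that $\ran W_\pm(H,H_0) = \ran P_{ac}(H)$, while $W_\pm(H,H_0)$ is isometric on $\ran P_{ac}(H_0) = \LN$ since $H_0$ is purely absolutely continuous.

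Unitarity then reduces to the claim that $P_{ac}(H) = I_{\LNs}$, equivalently, that $H$ has no singular spectrum. This is a direct consequence of the uniform resolvent estimate of the first step: the standard weak-$*$ boundary-value argument (cf.\ \cite[p.~148]{RS78}) shows that the uniform bound on $\langle Q\rangle^{-1}(H-zI_{\LNs})^{-1}\langle Q\rangle^{-1}$ forces the spectral measure of $H$ associated to each $\psi\in\LoneN$ to be absolutely continuous; density of $\LoneN$ in $\LN$ then gives $P_s(H)=0$. The intertwining property $HW_\pm(H,H_0) = W_\pm(H,H_0) H_0$ combined with unitarity of $W_\pm$ finally delivers the unitary equivalence of $H$ and $H_0$.

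I expect the only real technical content to lie in the first step. Once the global LAP for $H$ is in hand, every subsequent step is an application of well-understood abstract machinery. The smallness hypothesis on the coupling between $V$ and $H_0$ is precisely what allows Theorem \ref{t3.15}---whose strength is that it is global on $\bbR$ including the threshold $\lambda = 0$---to be propagated to $H$ without any loss at the threshold. Relaxing this smallness hypothesis would typically force recourse to Mourre theory or related techniques in order to handle the threshold behaviour intrinsically.
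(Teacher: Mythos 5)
Your proposal is correct and follows exactly the route the paper is alluding to: the paper sets up the resolvent identity \eqref{3.105zz} precisely so that the smallness hypothesis combined with Theorem \ref{t3.15} yields a global LAP for $H$, and then defers to the Kato--Birman smooth-perturbation machinery of \cite[Thm.~4.6.1]{Ya92} for existence, completeness, adjointness, and unitarity of the wave operators. Your fleshing out of the factorization $V=B_1^*B_2$, the passage from the weighted resolvent bound to $H_0$- and $H$-smoothness of $B_1,B_2$, and the absolute continuity of the spectrum of $H$ via the uniform boundary values of $\langle Q\rangle^{-1}(H-z I_{\LNs})^{-1}\langle Q\rangle^{-1}$ is exactly the content the paper omits by citation.
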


\begin{remark} \lb{r3.18}
Since we permit a (sufficiently decaying) matrix-valued potential $V$ in $H$, this includes, in particular, the case of electromagnetic interactions introduced via minimal coupling, that is, $V$ 
describes also special cases of the form,
\begin{equation}
H(q,A) := \alpha \cdot (-i \nabla - A) + q I_N = H_0 + [q I_N - \alpha \cdot A], \quad 
\dom(H(q,A)) = \WoneN,
\end{equation}
where $(q,A)$ represent the electromagnetic potentials on $\bbR^n$, with 
$q: \bbR^n \to \bbR$, $q \in L^{\infty}(\bbR^n)$, $A = (A_1,\dots,A_n)$, $A_j: \bbR^n \to \bbR$, 
$A_j \in L^{\infty}(\bbR^n)$, $ 1 \leq j \leq n$, and for some $C \in (0,\infty)$, 
\begin{equation}
|q(x)| + |A_j(x)| \leq C \langle x \rangle^{- 2}\, \text{ for a.e. $x \in \bbR^n$}, \; 1 \leq j \leq n.  
 \lb{3.101}
\end{equation}
${}$ \hfill $\diamond$
\end{remark}

\begin{remark} \lb{r3.19}
Using the notion of (local) strong operator smoothness as described in detail in 
\cite[Sects.~4.4--4.7]{Ya10}, the decay rate $\langle \, \cdot \, \rangle^{-2}$ in \eqref{3.92}, 
\eqref{3.101} can be relaxed to $\langle \, \cdot \, \rangle^{- \rho}$ for some $\rho > 1$ 
(cf.\ \cite{CGLNSZ17}) and this then permits situations where $H$ has eigenvalues and hence $H$ is no longer unitarily equivalent to $H_0$ and spectrally purely absolutely continuous. 
${}$ \hfill $\diamond$
\end{remark}

For additional (and more general) references in the context of smooth operator theory, limiting absorption principles, completeness of wave operators, and absence of singular continuous 
spectra, see, for instance, \cite{Ag75}, \cite{ABG96}, \cite{BH92}, \cite{BG10}, \cite{BMP93}, 
\cite{BM97}, \cite[Ch.~17]{BW83}, \cite{BD87}, \cite{Da05}, \cite{EGS08}, \cite{EGS09}, \cite{GM01}, \cite{Ge08}, \cite{GJ07}, \cite{Ka66}, \cite{Ku78}, \cite{MP96}, \cite{PSU95}, \cite{PSU98}, 
\cite[Sect.~XIII.7]{RS78}, \cite{Ri06}, \cite{RS12}, \cite{SU08}, \cite{Vo88}, 
\cite[Ch.~4]{Ya92}, \cite[Chs.~0--2]{Ya10}, \cite{Ya72}--\cite{Ya93}, in particular, global limiting absorption 
principles for Schr\"odinger operators can be found in \cite{EGS08}, \cite{EGS09}, \cite{RS12}. 

\medskip

\noindent
{\bf Acknowledgments.} 
We are indebted to Andrei Iftimovici for helpful discussions and for valuable hints to the literature on the commutator calculus.  We would also like to thank Eduard Tsekanovskii for helpful discussions. 

The authors are indebted to the Banff International Research Station for Mathematical Innovation and Discovery (BIRS) for their extraordinary hospitality during the focused research group on {\it Extensions of index theory inspired by scattering theory} (17frg668), June 18--25, 2017, where part of this work was done.

 
\end{document}